\documentclass[11pt]{amsart}

\usepackage[active]{srcltx}

 \usepackage{latexsym}
\usepackage{amssymb}
\usepackage{graphics}


\newtheorem{theorem}{Theorem}[section]
\newtheorem{corollary}{Corollary}[section]
\newtheorem{lemma}{Lemma}[section]

\newtheorem{remark}{Remark}[section]

\newtheorem{proposition}{Proposition}[section]

\def \l{\lambda }

\title[Fusion rules and  complete reducibility]{Fusion rules and  complete reducibility of certain modules for affine Lie algebras}
\subjclass[2000]{ Primary 17B69, Secondary 17B67, 81R10}
\author{Dra\v{z}en Adamovi\'{c} and Ozren Per\v{s}e}
\address{Faculty of Science - Department of Mathematics, University of Zagreb, Croatia}
\email{adamovic@math.hr; perse@math.hr}

\begin{document}

\newcommand{\wta}{{\rm {wt} }  a }
\newcommand{\R}{\frak R}
\newcommand{\Lat}{ R}
\newcommand{\qtrp}{\overline{V_L}}

\newcommand{\trp}{\mathcal{W}(p)}

\newcommand{\wtb}{{\rm {wt} }  b }
\newcommand{\bea}{\begin{eqnarray}}
\newcommand{\eea}{\end{eqnarray}}
\newcommand{\be}{\begin {equation}}
\newcommand{\ee}{\end{equation}}
\newcommand{\g}{\frak g}
\newcommand{\hg}{\hat {\frak g} }
\newcommand{\hn}{\hat {\frak n} }
\newcommand{\h}{\frak h}
\newcommand{\V}{\Cal V}
\newcommand{\hh}{\hat {\frak h} }
\newcommand{\n}{\frak n}
\newcommand{\Z}{\Bbb Z}
\newcommand{\N}{{\Bbb Z} _{> 0} }
\newcommand{\Zp} {\Z _ {\ge 0} }

\newcommand{\C}{\Bbb C}
\newcommand{\Q}{\Bbb Q}
\newcommand{\1}{\bf 1}
\newcommand{\la}{\langle}
\newcommand{\ra}{\rangle}

\newcommand{\wt}{{\rm {wt} }   }

\newcommand{\cA}{\Cal A}
\newcommand{\non}{\nonumber}
\def \l {\lambda}
\baselineskip=14pt
\newenvironment{demo}[1]%
{\vskip-\lastskip\medskip
  \noindent
  {\em #1.}\enspace
  }%
{\qed\par\medskip
  }

\maketitle
\begin{abstract}We develop a new method for  obtaining branching rules  for affine Kac-Moody Lie algebras at negative integer levels.
This method uses fusion rules for vertex operator algebras of affine
type. We prove  that an infinite family of ordinary modules for
affine vertex  algebra of type $A$ investigated in \cite{AP} is
closed under fusion. Then we apply these fusion rules on  explicit
bosonic realization of level $-1$ modules for the
 affine Lie algebra  of type $A_{\ell-1}^{(1)}$, obtain a new proof of complete reducibility for these representations, and
 the corresponding decomposition for $\ell \ge 3$.  We also obtain the complete reducibility of the associated level
$-1$ modules for affine Lie algebra of type $C_{\ell}^{(1)}$. Next
we notice that the category of $D_{ 2 \ell -1}^{(1)}$ modules at
level $-  2 \ell +3 $ obtained in \cite{P2}  has the isomorphic fusion
algebra. This enables us to  decompose certain  $E_6 ^{(1)}$ and
$F_4 ^{(1)}$--modules at negative levels.
\end{abstract}

\section{Introduction}

In the present paper we study  branching rules   for certain modules
for affine Lie algebras at negative levels. We study the free field
realizations of modules for affine Lie algebra of type $A$ of level
$-1$ introduced in \cite{FF} and certain representations which
naturally appear in the context of conformal embeddings in our
recent paper  \cite{AP-ART}.

V. Kac and M. Wakimoto considered in  \cite{KW-cmp-2001} the free
field representations of the affine Lie superalgebra
$\widehat{\mathfrak{gl}(m \vert \ell) }$  realized on the tensor product
$F_{m} \otimes M_{\ell}$, where $F_m$ (resp. $M_{\ell}$) is a vertex
algebra associated to infinite-dimensional Clifford (resp. Weyl)
algebra. They proved that every charge component $(F_{m} \otimes
M_{\ell}) ^{(s)}$ is an irreducible $\widehat{\mathfrak{gl}(m \vert
\ell)} $--module if $m \ge 1$. Their proof can not be applied in the
case $m=0$ since it uses super boson-fermion correspondence. Such
approach requires at least one fermionic field, and it is a
generalization of super boson-fermion correspondence based on
$\widehat{\mathfrak{gl}(1 \vert 1) }$ from \cite{KL}. So for the
case $m=0$, one needs different approach. V. Kac and M. Wakimoto
also noted in Remark 3.3. of \cite{KW-cmp-2001} that $M_{2} ^{(s)}$
is not irreducible $\widehat{\mathfrak{gl}(2) }$--module. So this
makes the problem of irreducibility of $M_{\ell} ^{(s)}$ open for
$\ell \ge 3$.

In this paper we extend the  Kac-Wakimoto irreducibility
result to $\widehat{\mathfrak{gl} (0, \ell)}
=\widehat{\mathfrak{gl}(\ell) }$--modules $M_{\ell} ^{(s)}$ for
$\ell \ge 3$. We will prove that every charge component $M_{\ell}
^{(s)}$ is an irreducible $\widehat{\mathfrak{gl}(\ell)} $--module.

Let us describe our construction in more detail.

Vertex operator algebra $M_{\ell}$ is simple and admits the charge
decomposition $M_{\ell}= \oplus _{s \in \Z} M_{\ell}^{(s)}$. The
charge zero subspace $M_{\ell}^{(0)}$ is a simple vertex operator
subalgebra of $M_{\ell}$ which contains the vertex operator algebra
$\widetilde{L}_{A_{ \ell -1} ^{(1)}} (-\Lambda_0)$ (defined by
relation (\ref{L-tilda})) associated to affine Lie algebra of type
$A_{ \ell -1} ^{(1)}$ of level $-1$, and a copy of rank one
Heisenberg vertex algebra $M(1)$, commuting with $\widetilde{L}_{A_{
\ell -1} ^{(1)}} (-\Lambda_0)$.

Let $\ell \ge 3$. In this paper we present a vertex algebraic proof
of the isomorphism
\bea && M_{\ell}^{(0)} \cong \widetilde{L}_{A_{ \ell -1} ^{(1)}}
(-\Lambda_0) \otimes M(1) \label{charge-0} \eea
which proves that $ \widetilde{L}_{A_{ \ell -1} ^{(1)}} (-\Lambda_0)$ is simple.

Our proof is based  the classification results from \cite{AP}, the notion
of fusion rules for vertex algebra modules and the fact that
$M_{\ell}$ is a simple vertex operator algebra. It turns out that ${A_{ \ell -1} ^{(1)}} $--modules
which are realized inside of $M_{\ell}$ are modules for the vertex operator algebra $V_{\ell-1,1}$
investigated in \cite{AP}, which  enables us to prove that these modules are closed under fusion.
Then explicit information on fusion rules and standard techniques in the vertex algebra theory prove (\ref{charge-0}).
We also show that
every $M_{\ell} ^{(s)}$ ($s \in \Z$) is an irreducible ${L}_{A_{\ell
-1} ^{(1)}}(-\Lambda_0) \otimes M(1)$--module.

It is interesting to notice that (\ref{charge-0}) does not hold if
$\ell \le 2$. For $\ell =1$, $M_{\ell} ^{(0)}$ is isomorphic to the
vertex algebra $\mathcal{W}_{1+ \infty}$ at central charge $c=-1$
(cf. \cite{KR}, \cite{Ad-com-alg}, \cite{Lin}, \cite{Wang-cmp}). The
case $\ell =2$ was described in \cite{KW-cmp-2001}.

Next, we consider some vertex subalgebras of $L_{A_{2 \ell -1}
^{(1)}} (-\Lambda_0)$ of affine type. In this  paper we shall
consider the vertex  subalgebra of type $C$, and the associated
bosonic representation $M_{2 \ell}$. The results from \cite{AP2}
then imply that $M_{2 \ell}$ is a completely reducible $L_{ C_{\ell}
^{(1)} } (-\Lambda_0)$--module.

In this paper we also find  that there can exist vertex operator
algebras of affine type having the same fusion algebras as
$L_{A_{\ell-1} ^{(1)}} (- \Lambda_0)$. For that purpose, we study
fusion rules for vertex operator algebras  of type $D$ introduced
recently  in \cite{P2}. We are focused on the category of ordinary
modules for vertex operator algebra ${\mathcal V}_{D_{2 \ell-1}
^{(1)}} (- (2 \ell -3) \Lambda_0)$ which is a non-trivial quotient
of the universal vertex operator algebra of level $-2 \ell +3$. In
order to determine the fusion rules, one needs to decompose certain
tensor products of irreducible finite-dimensional representations of
type $D$. But this job was done by S. Okada in \cite{Okada}. The
required tensor product is multiplicity free, and by using his
result we are able to calculate fusion rules for the  irreducible
representations classified in \cite{P2}. These fusion rules have
many interesting applications.

We first classify irreducible representations for the simple vertex
operator algebra $L_{D_{2 \ell-1} ^{(1)}} (- (2 \ell -3)
\Lambda_0)$. Next we are focused on the case $\ell =3$. Then the
simple vertex operator algebra $L_{D_{5} ^{(1)}} ( -3 \Lambda_0)$
can be realized as a subalgebra of the simple vertex operator
algebra $L_{ E_6 ^{(1)} }(- 3 \Lambda_0)$. By combining our fusion
rules analysis, we are able to decompose the vertex operator algebra
$L_{ E_6 ^{(1)} } (- 3 \Lambda_0)$ as a direct sum of irreducible
$L_{D_{5} ^{(1)}} ( -3 \Lambda_0) \otimes M(1)$--modules, where
$M(1)$ is the Heisenberg vertex operator algebra with central
charge~$1$. We obtain certain similar decompositions which follow
from the results on conformal embeddings from \cite{AP-ART}.

We assume that the reader is familiar with the notion of vertex
operator algebra (cf. \cite{B}, \cite{FHL}, \cite{FLM}, \cite{FrB},
\cite{FZ}, \cite{K2}, \cite{L}, \cite{LL}) and Kac-Moody algebra
(cf. \cite{K1}). Throughout the paper, we use the following
notation: ${\frak g}_{X_{\ell}}$ for simple Lie algebra of type
$X_{\ell}$, and $\hat{\frak g}_{X_{\ell}}$ for the associated affine
Lie algebra of type $X_{\ell} ^{(1)}$, $V_{X_{\ell}}(\mu)$ for
irreducible highest weight module for ${\frak g}_{X_{\ell}}$,
$L_{X_{\ell} ^{(1)}}(\lambda)$ for irreducible highest weight module
for $\hat{\frak g}_{X_{\ell}}$ and $N_{X_{\ell} ^{(1)}}(k
\Lambda_0)$ for the universal affine vertex algebra of level $k$
associated to $\hat{\frak g}_{X_{\ell}}$.

\bigskip
{\bf Acknowledgments:} We would like to thank the referee for his valuable comments.

\section{Weyl vertex algebras and some affine Lie algebras}

\label{sect-weyl-vertex}

The Weyl algebra $W_{\ell}( \tfrac{1}{2} + {\Z})$ is a complex
associative algebra generated by
$$  a ^{\pm} _{i}(r),  \quad   r  \in \tfrac{1}{2} + {\Z}, \ 1 \le i \le \ell$$ with non-trivial relations
$$
 [a_{i} ^{+}(r)  , a_{j} ^{-}(s)  ]  =\delta_{r+s,0} \delta_{i,j}
$$
where $r, s \in {\tfrac{1}{2}}+ {\Z}$, $ i, j \in \{1, \dots, \ell
\}$.

 Let $M_{\ell}$ be the irreducible $W_{\ell}( \tfrac{1}{2} + {\Z})$--module generated by
 the
cyclic vector ${\1}$ such that

$$ a^{\pm} _i  (r) {\1}     =  0 \quad
\mbox{for} \ \ r > 0 ,  \ 1 \le i \le \ell .$$

Define the following   fields on $M_{\ell}$
$$   a_i ^{\pm }(z) = \sum_{ n \in   {\Z}
 } a_i ^{\pm }(n+ \tfrac{1}{2} )  z ^{-n- 1}, \quad 1 \le i \le \ell. $$
 The fields $ a_{i} ^{\pm }(z)$, $i =1, \dots , \ell $ generate on $M_{\ell}$  the
unique structure of a simple vertex algebra (cf. \cite{K2},
\cite{FrB}). Let us denote the corresponding vertex operator by $Y$.

We have the following conformal vector in $M_{\ell}$:
\bea &&\omega=\frac{1}{2} \sum_{i=1} ^{\ell} ( a_i ^- (-
\frac{3}{2}) a_i ^{+} (-\frac{1}{2} ) -a_i ^+ (-  \frac{3}{2}) a_i
^{-} (-\frac{1}{2} ) ) {\1}. \label{vir-bos} \eea
Let $Y(\omega,z) = \sum_{n \in {\Z} } L(n) z ^{-n-2}$.
Then $M_{\ell}$ is a $\tfrac{1}{2}{\Zp}$--graded with respect to
$L(0)$:
$$ M_{\ell} = \bigoplus_{ m \in \tfrac{1}{2} \Zp} M_{\ell} (m), \quad M_{\ell} (m)= \{ v \in M_{\ell} \ \vert \  L(0) v = m v \}.$$
Note that $M_{\ell} (0) = {\C} {\1}$. For $ v \in M_{\ell} (m)$ we
shall write $\wt (v) = m$.

Define

$$ H = - \sum_{i = 1 } ^{\ell} a _i ^+ (-\tfrac{1}{2}) a_i ^- (-\tfrac{1}{2}) {\bf 1}. $$

Let $M(1)$ be the Heisenberg vertex subalgebra of $M_{\ell}$ generated by $H$.
Let $Y(H, z) = \sum_{n \in {\Z}} H(n) z ^{-n-1}.$

Then
$$[H(0), a_i^{\pm} (n)] =\pm a_i^{\pm} (n), \quad 1 \le i \le \ell. $$
Clearly, $H(0)$ acts semisimply on $M_{\ell}$ and it defines the
following ${\Z}$--gradation, called the charge decomposition:
$$
M_{\ell} = \bigoplus_{ s \in \Z} M_{\ell} ^{(s)} , \quad M_{\ell} ^{(s)} = \{ v \in M_{\ell} \ \vert \ H(0) v =  s v \}. $$
Denote by $M(1, r)$ the irreducible $M(1)$--module on which $H(0)$
acts as $r \mbox{Id}$, $r \in {\C}$.

The subalgebra of $M_{\ell}$ generated by
\bea && e_{\epsilon _{i}- \epsilon _{j}} = a_i ^+ (-\tfrac{1}{2})
a_{j} ^-( -\tfrac{1}{2})
 {\1}, \quad f_{\epsilon _{i}- \epsilon _{j}}=  a_i ^- (-\tfrac{1}{2}) a_{j} ^+( -\tfrac{1}{2})
 {\1},  \nonumber \\
&& h_{\epsilon _{i}- \epsilon _{j}} = -a_i ^+ (-\tfrac{1}{2}) a_i^-(
-\tfrac{1}{2}) {\1} + a_{j} ^+ (-\tfrac{1}{2}) a_{j}^-(
-\tfrac{1}{2}) {\1}, \nonumber \\
&& \qquad \qquad \qquad \qquad \qquad \qquad \qquad \qquad \qquad
\qquad  \mbox{for }
  i,j =1, \dots, \ell, i<j, \label{generatoriA}
\eea
is level $-1$ affine vertex operator algebra associated to the
affine Lie algebra ${\hat \g}_ {A_{ \ell -1}}$ of type $A_{ \ell
-1}^{(1)}$. By universal property of the vertex algebra  $N_{A_{ \ell -1} ^{(1)}}(-\Lambda_0)$ (cf. \cite{K2}),  we get  a homomorphism of vertex algebras
$$\Phi :  N_{A_{ \ell -1} ^{(1)}}(-\Lambda_0)
 \rightarrow M_{\ell} ^0. $$
 The image of this homomorphism we denote by
 $\widetilde{L}_{A_{ \ell -1} ^{(1)}}
(-\Lambda_0)$, i.e.
\bea \widetilde{L}_{A_{ \ell -1} ^{(1)}} (-\Lambda_0) =
\mbox{Im}(\Phi). \label{L-tilda} \eea

Claim of the following proposition was given in the proof of
Proposition 5 from \cite{AP2} (see also Proposition 2 from that
paper) in the case when $\ell$ is even. The proof in the case when
$\ell$ is odd is analogous.

\begin{proposition} \cite{AP2} \label{lem-konf-vektor} Vertex subalgebra $\widetilde{L}_{A_{ \ell -1} ^{(1)}} (-\Lambda_0)
\otimes M(1)$ of $M_{\ell}$ has the same conformal vector as
$M_{\ell}$. Thus
$$ \omega = \omega_{Sug} + \omega_1, $$
where $\omega_{Sug}$ is conformal vector in $\widetilde{L}_{A_{ \ell -1} ^{(1)}} (-\Lambda_0)$ obtained by the Sugawara construction and
$$\omega_1 = -\frac{1}{2 \ell} H(-1) ^2 {\1}$$
is a conformal vector in $M(1)$.
\end{proposition}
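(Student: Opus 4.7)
The plan is to verify the proposition by a direct computation in $M_\ell$, following the template of Proposition 5 in \cite{AP2}. The starting observation is that the currents $E_{ij}(z) = Y(a_i^+(-\tfrac{1}{2})a_j^-(-\tfrac{1}{2})\mathbf{1},z)$, for $1\le i,j\le \ell$, realize an affine $\widehat{\mathfrak{gl}_\ell}$-algebra at level $-1$ inside $M_\ell$. The off-diagonal fields and the traceless combinations $E_{ii}-E_{jj}$ generate $\widetilde{L}_{A_{\ell-1}^{(1)}}(-\Lambda_0)$ via the generators displayed in (\ref{generatoriA}), while $H=-\sum_i E_{ii}$ spans the central Heisenberg factor, so that the Lie-algebra decomposition $\mathfrak{gl}_\ell = \mathfrak{sl}_\ell \oplus \C\,H$ mirrors the conjectured decomposition of $\omega$.

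The Sugawara construction for $\widehat{\mathfrak{sl}_\ell}$ at level $-1$ is well defined because the shifted level $k+h^\vee = -1+\ell = \ell-1$ is non-zero for $\ell\ge 2$ (recall $h^\vee=\ell$ for $A_{\ell-1}$). Writing $\omega_{Sug} = \tfrac{1}{2(\ell-1)}\sum_a :J^a J_a:\mathbf{1}$ with a dual basis of $\mathfrak{sl}_\ell$ consisting of the root vectors $e_{\epsilon_i-\epsilon_j}, f_{\epsilon_i-\epsilon_j}$ from (\ref{generatoriA}) together with a Killing-orthonormal Cartan basis, I would expand each quadratic summand in the Weyl modes and bring the result into normal-ordered form using $[a_i^+(r),a_j^-(s)] = \delta_{r+s,0}\delta_{ij}$. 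After adding $\omega_1=-\tfrac{1}{2\ell}H(-1)^2\mathbf{1}$ the quartic Weyl monomials recombine, and the surviving terms of the form $a_i^\pm(-\tfrac{3}{2})a_i^\mp(-\tfrac{1}{2})\mathbf{1}$ match the coefficients in (\ref{vir-bos}).

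A conceptually cleaner organization is to first define the $\widehat{\mathfrak{gl}_\ell}$-type vector $\widetilde{\omega} = \tfrac{1}{2(\ell-1)}\sum_{i,j=1}^{\ell}E_{ij}(-1)E_{ji}(-1)\mathbf{1}$, verify that $\widetilde{\omega}-\omega \in \C\,H(-1)^2\mathbf{1}$ by computing the action of its zero mode on the generating currents $E_{ij}$ (using the OPE and commutation relations of the $\widehat{\mathfrak{gl}_\ell}$-algebra inside $M_\ell$), and then pin down the scalar by comparing the action on $H$ or by matching central charges. The coefficient $-\tfrac{1}{2\ell}$ appearing in $\omega_1$ is precisely what is needed to absorb the trace part of $\widetilde{\omega}$ and leave $\omega_{Sug}$ on the $\mathfrak{sl}_\ell$ side.

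The main obstacle is the bookkeeping of signs and normal-ordering corrections in the Weyl-algebra expansion; these are elementary but voluminous. Because the argument only uses that $\ell-1$ and $\ell$ are non-zero, it is insensitive to the parity of $\ell$, which is why the odd case in the present proposition reduces to the same computation that was carried out for even $\ell$ in \cite{AP2}.
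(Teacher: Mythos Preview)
Your proposal is correct and follows essentially the same route as the paper: write out the Sugawara vector for $\widehat{\mathfrak{sl}_\ell}$ at level $-1$ in terms of the root vectors and an orthogonal Cartan basis, expand each quadratic term via the Weyl generators (\ref{generatoriA}), normal-order, and observe that the result differs from $\omega$ by exactly $\tfrac{1}{2\ell}H(-1)^2\mathbf{1}$. The paper does precisely this direct computation (with Cartan elements denoted $H^{(i)}$); your alternative $\widehat{\mathfrak{gl}_\ell}$ organization is a minor repackaging of the same calculation.
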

\begin{proof}
We will briefly sketch the proof, and omit details because of its
similarity with the proof of Proposition 2 from \cite{AP2}. Let us
denote
$$ H^{(i)} =  - \sum_{r = 1 }^{i} a_r ^+ (-\tfrac{1}{2}) a_r^-( -\tfrac{1}{2}){\1}
+i a_{i+1} ^+ (-\tfrac{1}{2}) a_{i+1}^-( -\tfrac{1}{2}) {\1}, \quad
\mbox{for }
  i =1, \dots, \ell -1. $$
Using explicit formula for Sugawara conformal vector (cf. \cite{FZ},
\cite{L}, \cite{FrB}, \cite{K2}, \cite{LL}), we obtain:
\begin{eqnarray} && \omega_{Sug} = \frac{1}{2 (\ell -1)} \Big(
\sum_{i,j=1 \atop i<j}^{\ell} (e_{\epsilon _{i}- \epsilon
_{j}}(-1)f_{\epsilon _{i}- \epsilon _{j}}(-1)+ f_{\epsilon _{i}-
\epsilon _{j}}(-1)e_{\epsilon _{i}- \epsilon _{j}}(-1)){\1} \nonumber \\
&& \qquad \quad +\sum_{i=1}^{\ell -1} \frac{1}{i(i+1)} H^{(i)}(-1)
^2 {\1} \Big). \nonumber
 \end{eqnarray}
Using relations (\ref{generatoriA}) we obtain
\bea && (e_{\epsilon _{i} - \epsilon _{j}}(-1)f_{\epsilon _{i} -
\epsilon _{j}}(-1)+f_{\epsilon _{i} - \epsilon _{j}}(-1)e_{\epsilon _{i} - \epsilon _{j}}(-1)){\1} \nonumber \\
&& = 2 a_i ^+ (-\tfrac{1}{2})a_i ^- (-\tfrac{1}{2}) a_j ^+ (-\tfrac{1}{2}) a_j ^- (-\tfrac{1}{2}){\1} \nonumber \\
&& + a_i ^- (-\tfrac{3}{2})a_i ^+ (-\tfrac{1}{2}){\1}  + a_j ^- (-\tfrac{3}{2})a_j ^+ (-\tfrac{1}{2}){\1} \nonumber \\
&& -a_i ^+ (-\tfrac{3}{2})a_i ^- (-\tfrac{1}{2}){\1}  - a_j ^+
(-\tfrac{3}{2})a_j ^- (-\tfrac{1}{2}){\1} \quad (i<j). \nonumber
\eea
Now, direct calculations give:
\bea \omega_{Sug}= \frac{1}{2} \sum_{i=1} ^{\ell} ( a_i ^- (-
\frac{3}{2}) a_i ^{+} (-\frac{1}{2} ) -a_i ^+ (- \frac{3}{2}) a_i
^{-} (-\frac{1}{2} ) ) {\1} + \frac{1}{2 \ell} H(-1) ^{2} {\1},
\nonumber \eea which implies the claim of proposition.
\end{proof}
Proposition \ref{lem-konf-vektor} implies that $\widetilde{L}_{A_{ \ell
-1} ^{(1)}} (-\Lambda_0) \otimes M(1)$--submodule
$$\widetilde{L}_{A_{ \ell -1} ^{(1)}} (-\Lambda_0 + \mu) \otimes
M(1,s)$$
of $M_{\ell}$ has lowest conformal weight
\begin{equation} \label{rel-lowest-confw}
\frac{(\mu , \mu + 2 \rho)}{2(\ell -1)}- \frac{s^2}{2 \ell}.
\end{equation}

Modules $M_{\ell} ^{(s)}$ admit the following $\Zp$--gradation:
$$  M_{\ell} ^{(s)} = \bigoplus_{n = 0 } ^{\infty} M_{\ell} ^{(s)} (n), \qquad M_{\ell} ^{(s)} (n) =
\{ v \in  M_{\ell} ^{(s)} \ \vert \ L(0) v = ( \vert s \vert /2 + n)
v \}. $$ Lowest component of  $M_{\ell} ^{(s)}$ consists of vectors of weight $\vert s \vert /2 $ with respect to $L(0)$ and it is  generated by $a_1 ^+ (-1/ 2) ^s {\1}$
if $s \ge 0$ and by $a_{\ell} ^- (-1/2) ^{-s} {\1}$ if $s <0$.

\begin{lemma} \label{lowest-1} We have:
$$ \widetilde{L}_{A_{ \ell -1} ^{(1)}} (-\Lambda_0) \otimes M(1)  . \ a_1 ^+ (-1/ 2) ^s {\1} \cong
\widetilde{L}_{A_{ \ell -1} ^{(1)}} (- (s+1) \Lambda_0 + s
\Lambda_1) \otimes M(1, s), $$
$$ \widetilde{L}_{A_{ \ell -1} ^{(1)}} (-\Lambda_0) \otimes M(1) . \ a_{\ell}  ^- (-1/ 2) ^s {\1} \cong
\widetilde{L}_{A_{ \ell -1} ^{(1)}} (- (s+1) \Lambda_0 + s
\Lambda_{\ell -1} ) \otimes M(1, -s), $$ where $\widetilde{L}_{A_{
\ell -1} ^{(1)}} (- (s+1) \Lambda_0 + s \Lambda_1)$  and  $
\widetilde{L}_{A_{ \ell -1} ^{(1)}} (- (s+1) \Lambda_0 + s
\Lambda_{\ell -1} ) $ are certain highest weight $A_{ \ell -1}
^{(1)}$--modules with highest weights
$$\lambda_s =- (s+1) \Lambda_0 + s \Lambda_1 \quad \mbox{and} \quad \mu_s = - (s+1) \Lambda_0 + s \Lambda_{\ell -1}.$$
Moreover, lowest  components of these modules are finite-dimensional
$\g _{A_{ \ell -1}}$--modules isomorphic to $V_{A_{ \ell
-1}}(s\omega_1)$ and $V_{A_{ \ell -1}}(s \omega_{\ell -1})$,
respectively.
 \end{lemma}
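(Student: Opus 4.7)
The strategy is to show that the vectors $v^+_s := a_1^+(-1/2)^s\mathbf{1}$ and $v^-_s := a_\ell^-(-1/2)^s\mathbf{1}$ are highest-weight vectors for the commuting actions of $\widetilde{L}_{A_{\ell-1}^{(1)}}(-\Lambda_0)$ and $M(1)$, to compute their weights explicitly, and then to deduce the tensor-product decomposition via the universal property of highest-weight modules together with Proposition \ref{lem-konf-vektor}.

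For $v^+_s$, every computation flows from $[a_i^+(r), a_j^-(s)] = \delta_{i,j}\delta_{r+s,0}$ and $a_i^\pm(r)\mathbf{1}=0$ for $r>0$. A direct oscillator calculation shows that every positive mode of the currents $e_\alpha(z)$, $f_\alpha(z)$, $h_\alpha(z)$ and $H(z)$ annihilates $v^+_s$, as do the zero-modes $e_{\epsilon_i-\epsilon_j}(0)$ for $i<j$ and the affine raising mode $f_{\epsilon_1-\epsilon_\ell}(1)$. Moreover $H(0)v^+_s = s v^+_s$, $h_{\epsilon_1-\epsilon_2}(0)v^+_s = s v^+_s$, $h_{\epsilon_i-\epsilon_{i+1}}(0)v^+_s = 0$ for $i\ge 2$, and the central element acts as $-1$. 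These eigenvalues identify the affine highest weight of $v^+_s$ as $\lambda_s$ and its Heisenberg weight as $s$.

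By Proposition \ref{lem-konf-vektor}, $\widetilde{L}_{A_{\ell-1}^{(1)}}(-\Lambda_0)$ and $M(1)$ commute inside $M_\ell$, so the cyclic $\widetilde{L}_{A_{\ell-1}^{(1)}}(-\Lambda_0)\otimes M(1)$-submodule generated by $v^+_s$ factors as a tensor product: the first factor is $\widetilde{L}_{A_{\ell-1}^{(1)}}(\lambda_s)$, by definition the image of the Weyl module of highest weight $\lambda_s$, and the second is $M(1)\cdot v^+_s \cong M(1,s)$. The identity $L(0)v^+_s = (s/2)v^+_s$ agrees with (\ref{rel-lowest-confw}) at $\mu=s\omega_1$; and since $M_\ell^{(s)}(0)$ is spanned by the degree-$s$ monomials in $\{a_i^+(-1/2)\}_{i=1}^\ell$ and carries the $\mathfrak{g}_{A_{\ell-1}}$-module structure of $\mathrm{Sym}^s(\mathbb{C}^\ell)\cong V_{A_{\ell-1}}(s\omega_1)$ with highest-weight vector $v^+_s$, the lowest component of $\widetilde{L}_{A_{\ell-1}^{(1)}}(\lambda_s)$ equals $V_{A_{\ell-1}}(s\omega_1)$.

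The case of $v^-_s$ is entirely symmetric: $a_\ell^-(-1/2)$ has charge $-1$ and $\mathfrak{sl}_\ell$-weight $-\epsilon_\ell$, which equals $\omega_{\ell-1}$ in the fundamental-weight basis, giving affine highest weight $\mu_s$ and Heisenberg weight $-s$, with lowest component $\mathrm{Sym}^s((\mathbb{C}^\ell)^*)\cong V_{A_{\ell-1}}(s\omega_{\ell-1})$. No step constitutes a genuine obstacle; the check demanding the most care is $f_{\epsilon_1-\epsilon_\ell}(1)v^\pm_s = 0$, the one positive-affine identity involving the nontrivial commutator $[a_1^+, a_1^-]=1$ (resp.\ $[a_\ell^+, a_\ell^-]=1$), but it too reduces to a short manipulation.
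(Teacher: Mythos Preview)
Your proposal is correct and follows essentially the same approach as the paper: both argue that $a_1^+(-1/2)^s\mathbf{1}$ and $a_\ell^-(-1/2)^s\mathbf{1}$ are singular vectors for $\hat{\frak g}_{A_{\ell-1}}$ of the stated weights, and that the lowest $L(0)$-component, being finite-dimensional, forces the $\frak g_{A_{\ell-1}}$-module generated by the singular vector to be irreducible. The paper's proof is a two-sentence sketch of exactly these points, whereas you spell out the oscillator checks, the Heisenberg weight, and the identification of the lowest component with a symmetric power; your added detail is welcome but does not constitute a different method.
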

\begin{proof}
The proof follows from the fact that $a_1 ^+ (-1/ 2) ^s {\1}$ (resp. $a_{\ell}  ^- (-1/ 2) ^s {\1}$)  is a singular vector for ${\hat \g}_ {A_{ \ell -1}}$ of weight $ -(s+1) \Lambda_0 + s
\Lambda_1$ (resp.  $(s+1) \Lambda_0 + s
\Lambda_{\ell -1}$). Since $\g _{A_{ \ell -1}}$--submodule generated by this singular vector belongs to a lowest component of  $M_{\ell} ^{(s)}$, which is finite--dimensional, we get that this module   must be irreducible.
\end{proof}

\section{Main results and steps of the proof}

\label{sect-main}

The main result of the paper is that, for $\ell \ge 3$, every charge
component $M_{\ell} ^{(s)}$ ($s \in \Z$) is an irreducible
$\widehat{\mathfrak{gl}(\ell) }$--module. In this section we list
the main steps of the proof of that claim.

The following proposition is standard (see, for example, Proposition
5.1 of \cite{Ad-jpaa-2005} for a similar version). For completeness we will also include this  proof.

\begin{proposition} \label{ired-1}
$M_{\ell} ^{(0)}$ is a simple vertex subalgebra of $M_{\ell}$. Each
$M_{\ell} ^{(s)}$ ($s \in \Z$) is a simple $M_{\ell}
^{(0)}$--module.
\end{proposition}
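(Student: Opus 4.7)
My approach is to exploit the simplicity of $M_\ell$ itself, combined with the fact that the charge grading $M_\ell = \bigoplus_{s \in \Z} M_\ell^{(s)}$ is compatible with the vertex operator structure. Since $[H(0), v(n)] = m\,v(n)$ for $v \in M_\ell^{(m)}$ (a consequence of $H(0)$ being a derivation of the vertex algebra, which in turn follows from the commutator formula and $H(n)\1 = 0$ for $n \ge 0$), one obtains
\[
v(n) \cdot M_\ell^{(s)} \subseteq M_\ell^{(m+s)} \qquad \text{for } v \in M_\ell^{(m)},\ n \in \Z.
\]
Together with $\1 \in M_\ell^{(0)}$, this already shows that $M_\ell^{(0)}$ is a vertex subalgebra of $M_\ell$, and that every $M_\ell^{(s)}$ carries a natural $M_\ell^{(0)}$-module structure.

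To prove that $M_\ell^{(s)}$ is simple as an $M_\ell^{(0)}$-module, I would take a nonzero $M_\ell^{(0)}$-submodule $W \subseteq M_\ell^{(s)}$ and form
\[
U := \mathrm{span}\bigl\{ v(n) w \mid v \in M_\ell,\ n \in \Z,\ w \in W \bigr\}.
\]
A routine application of the commutator formula shows that $U$ is an ideal (i.e.\ a vertex algebra submodule) of $M_\ell$. Since $\1(-1)w = w$ and $W \neq 0$, we have $U \neq 0$, so simplicity of $M_\ell$ forces $U = M_\ell$. Decomposing by charge, $U = \bigoplus_{t \in \Z}(U \cap M_\ell^{(t)})$. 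The $s$-component $U \cap M_\ell^{(s)}$ is spanned by modes $v(n)w$ with $v \in M_\ell^{(0)}$ and $w \in W$, so it lies inside $W$ by the $M_\ell^{(0)}$-stability of $W$; the reverse inclusion is immediate from $W \subseteq U$. Hence $W = U \cap M_\ell^{(s)} = M_\ell^{(s)}$.

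The first assertion of the proposition is then the $s=0$ case of the second, since a vertex algebra ideal in $M_\ell^{(0)}$ is by definition an $M_\ell^{(0)}$-submodule.

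I do not anticipate any genuine obstacle. The only minor points requiring care are the verification that the charge grading is preserved by all modes (routine, via the derivation property of $H(0)$) and the observation that the ideal $U$ generated by $W$ inside $M_\ell$ meets $M_\ell^{(s)}$ in exactly $W$; both amount to transporting the simplicity of $M_\ell$ through its $\Z$-grading.
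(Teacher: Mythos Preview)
Your argument is correct and follows essentially the same route as the paper's proof: both use that the charge grading satisfies $M_\ell^{(r)}\cdot M_\ell^{(s)}\subseteq M_\ell^{(r+s)}$ and then transport the simplicity of $M_\ell$ through this $\Z$--grading. The paper phrases it with a single vector (citing Dong--Mason to get $M_\ell = M_\ell . v$ for any nonzero $v$, then projecting onto the $r$--component), whereas you phrase it with a submodule $W$ and the ideal $U=M_\ell . W$; these are cosmetic variants of the same idea.
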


\begin{proof}
   Clearly, the charge zero component  $ M_{\ell} ^{(0)}$ is a vertex subalgebra of $M$, and for every $ r , s \in {\Z}$ we have that
  $$ M_{\ell} ^{(r)} \cdot M_{\ell} ^{(s)} = \mbox{span}_{\Bbb C} \{ a_j b \ \vert  \ a \in  M_{\ell} ^{(r)}, \ b \in  M_{\ell} ^{(s)}, \ j \in { \Z} \} \subset  M_{\ell} ^{(r+s)}. $$

  Let $v \in  M_{\ell} ^{(r)}$. Since $M_{\ell}$ is a simple vertex algebra, by Corollary 4.2 of \cite{DM-galois} we have that
  $$ M_{\ell}   = M_{\ell}  . v = \mbox{span}_{\Bbb C} \{ a_j v \ \vert \ a \in M_{\ell}, \ j \in {\Z} \}. $$
  This implies that $ M_{\ell} ^{(r)} =  M_{\ell} ^{(0)}. v $, which  proves simplicity of $ M_{\ell} ^{(r)}$ as $ M_{\ell} ^{(0)}$--module and simplicity of the vertex subalgebra $ M_{\ell} ^{(0)}$.
\end{proof}
The crucial step in proof of the main result is the following
theorem, which is also significant by itself:
\begin{theorem} \label{prosta-0} Assume that $\ell \ge 3$. We have:
 $$M_{\ell} ^{(0)} \cong \widetilde{L}_{A_{ \ell -1} ^{(1)}} (-\Lambda_0) \otimes M(1).$$
 In particular, $\widetilde{L}_{A_{ \ell -1}
^{(1)}} (-\Lambda_0) = U(\hg _{A_{ \ell -1}}). {\1}$ is a simple
vertex operator algebra.
 \end{theorem}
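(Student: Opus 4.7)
The plan is to show that the inclusion $V := \widetilde{L}_{A_{\ell-1}^{(1)}}(-\Lambda_0) \otimes M(1) \subseteq M_\ell^{(0)}$ is actually an equality, by invoking the fusion-closed family of modules from \cite{AP}. By Proposition~\ref{lem-konf-vektor}, $V$ has the same conformal vector as $M_\ell$, so every charge component $M_\ell^{(s)}$ becomes a $V$-module with a compatible $L(0)$-grading, and $V \subseteq M_\ell^{(0)}$ as $V$-submodules.

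First I would identify a canonical irreducible $V$-submodule in every charge component. For each $s \in \Z$, Lemma~\ref{lowest-1} produces an irreducible $V$-submodule $U_s \subseteq M_\ell^{(s)}$; its lowest $L(0)$-weight is $|s|/2$, matching that of $M_\ell^{(s)}$, and $U_0 = V$. In particular the top component of $U_s$ exhausts the top of $M_\ell^{(s)}$.

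The core step is to use the classification and closure under fusion from \cite{AP}: the modules $U_s$ are exactly the irreducibles of a fusion-closed family there (they arise as modules for the larger vertex operator algebra $V_{\ell-1,1}$ tensored with Fock modules $M(1,s)$), with explicit fusion rules dictated by the multiplicity-free tensor decompositions of the finite-dimensional tops $V_{A_{\ell-1}}(s\omega_1) \otimes V_{A_{\ell-1}}(t\omega_{\ell-1})$. Because $M_\ell$ is simple, the vertex operator $Y_{M_\ell}(\cdot,z)$ restricts to nonzero intertwining operators of type $\binom{M_\ell^{(r+s)}}{M_\ell^{(r)}\ M_\ell^{(s)}}$ for all $r,s$, and these must be compatible with the fusion among the $U_s$; a standard argument then forces every irreducible $V$-subquotient of $M_\ell^{(s)}$ to lie in the classified family, hence to be isomorphic to $U_s$. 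Suppose now $M_\ell^{(0)} \supsetneq V$, and take an irreducible subquotient $U'$ of $M_\ell^{(0)}/V$. Then $U' \cong U_0 = V$, so the top of $U'$ sits in $L(0)$-degree $0$; but $M_\ell^{(0)}(0) = \C \mathbf{1} \subseteq V$, so the top of any irreducible subquotient of $M_\ell^{(0)}/V$ must lie in strictly positive $L(0)$-degree, a contradiction. Hence $M_\ell^{(0)} = V$.

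The main obstacle, which I expect to absorb most of the technical work, is matching the modules $U_s$ to the classification of \cite{AP} and extracting precisely the fusion-rule information needed to force every $V$-subquotient of $M_\ell^{(s)}$ into the family $\{U_t\}_{t \in \Z}$; the hypothesis $\ell \ge 3$ enters here through the structure of $V_{\ell-1,1}$ and the relevant tensor-product decomposition. Once $M_\ell^{(0)} \cong V$ is established, the second assertion of the theorem is immediate: Proposition~\ref{ired-1} makes $M_\ell^{(0)}$ simple, so its tensor factor $\widetilde{L}_{A_{\ell-1}^{(1)}}(-\Lambda_0) = \mathrm{Im}(\Phi) = U(\hg_{A_{\ell-1}}) \cdot \mathbf{1}$ is a simple vertex operator algebra.
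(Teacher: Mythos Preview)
Your proposal has the right ingredients (simplicity of $M_\ell$, the classification from \cite{AP}, the tensor-product decompositions of the top components) and the right overall shape, but the decisive step is not actually carried out. You assert that ``a standard argument then forces every irreducible $V$-subquotient of $M_\ell^{(s)}$ \dots\ to be isomorphic to $U_s$,'' and then derive the contradiction from $M_\ell^{(0)}(0)=\C\mathbf{1}$. The classification (Proposition~\ref{prop-class-JA}) only tells you that any irreducible subquotient of $M_\ell^{(0)}$ has the form $\pi_t\otimes M(1,0)$ for some $t\in\Z$; it does \emph{not} tell you $t=0$. No ``closure under fusion'' is proved in \cite{AP}---that paper only classifies the irreducibles---and the fusion rules in the present paper are established only \emph{after} Theorem~\ref{prosta-0}. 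So the ``hence $\cong U_s$'' step, on which your degree-$0$ contradiction rests, is exactly the content that still needs proof.

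The paper closes this gap by a different, more concrete mechanism. If $\Omega\in M_\ell^{(0)}$ is a singular vector of weight $\lambda_r$ (so $U(\g_{A_{\ell-1}})\cdot\Omega\cong V_{A_{\ell-1}}(r\omega_1)$), one lets $\Omega$ act on the lowest-weight vector $a_\ell^-(-1/2)^r\mathbf{1}\in M_\ell^{(-r)}$; simplicity of $M_\ell$ makes this nonzero. Looking at the last nonvanishing mode $\Omega_{n_0}$, the $\g_{A_{\ell-1}}$-module it generates is a quotient of $V_{A_{\ell-1}}(r\omega_1)\otimes V_{A_{\ell-1}}(r\omega_{\ell-1})$. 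Lemma~\ref{vazna-1}(iii) together with Proposition~\ref{prop-class-JA} then forces this top to be the \emph{trivial} module, so the resulting $\widetilde L\otimes M(1)$-submodule has lowest conformal weight $-r^2/(2\ell)$ by (\ref{rel-lowest-confw}). That is impossible in $M_\ell^{(-r)}$, whose minimal $L(0)$-eigenvalue is $r/2$. This conformal-weight contradiction is the missing idea; your degree-$0$ argument would only apply once one already knows the stronger statement that every subquotient of $M_\ell^{(0)}$ is $\cong V$, which is not available at this point.
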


The proof of Theorem \ref{prosta-0} will use the classification
result from \cite{AP}, which we recall in Section \ref{sect-aff-1},
and certain fusion rules arguments. We present that proof in
separate Section \ref{sect-irr-charge}.

\begin{theorem} \label{irr-charge} Assume that $\ell \ge 3$ and $s \in \Z$. Then $M_{\ell} ^{(s)}$ is an
irreducible $\widehat{\mathfrak{gl}(\ell) }$--module. In particular
\bea
M_{\ell} ^{(s)} \cong  L_{A_{ \ell -1} ^{(1)}} (- (s+1) \Lambda_0 + s \Lambda_{1} ) \otimes M(1, s) \qquad (s \ge 0),  \nonumber \\
M_{\ell} ^{(s)} \cong  L_{A_{ \ell -1} ^{(1)}} ( (s-1) \Lambda_0 - s
\Lambda_{\ell -1} ) \otimes M(1, s) \qquad (s <  0).  \nonumber \eea

\end{theorem}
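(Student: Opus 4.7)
The plan is to assemble Proposition \ref{ired-1}, Theorem \ref{prosta-0}, and Lemma \ref{lowest-1}. By Proposition \ref{ired-1}, each $M_{\ell}^{(s)}$ is a simple module over the vertex subalgebra $M_{\ell}^{(0)}$; by Theorem \ref{prosta-0}, for $\ell \ge 3$ one has
\[
M_{\ell}^{(0)} \cong \widetilde{L}_{A_{\ell-1}^{(1)}}(-\Lambda_0) \otimes M(1) \cong L_{A_{\ell-1}^{(1)}}(-\Lambda_0) \otimes M(1).
\]
The vectors in (\ref{generatoriA}) together with $H$ realize the action of $\widehat{\mathfrak{gl}(\ell)}$ and, by Theorem \ref{prosta-0}, generate $M_{\ell}^{(0)}$ as a vertex algebra. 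Hence the lattices of $M_{\ell}^{(0)}$-submodules and $\widehat{\mathfrak{gl}(\ell)}$-submodules of $M_{\ell}^{(s)}$ coincide, so simplicity over $M_{\ell}^{(0)}$ immediately upgrades to irreducibility as a $\widehat{\mathfrak{gl}(\ell)}$-module.

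For the explicit identification I would treat the case $s \ge 0$; the case $s < 0$ is entirely symmetric, with the lowest weight vector $a_{\ell}^{-}(-1/2)^{-s}\1$ replacing $a_{1}^{+}(-1/2)^{s}\1$. By Lemma \ref{lowest-1}, the submodule generated by $a_{1}^{+}(-1/2)^{s}\1$ under the action of $\widetilde{L}_{A_{\ell-1}^{(1)}}(-\Lambda_0) \otimes M(1)$ is isomorphic to the highest weight module $\widetilde{L}_{A_{\ell-1}^{(1)}}(\lambda_s) \otimes M(1,s)$, with $\lambda_s = -(s+1)\Lambda_0 + s\Lambda_1$. Being nonzero, by the simplicity of $M_{\ell}^{(s)}$ it must coincide with all of $M_{\ell}^{(s)}$. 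Since $M(1,s)$ is simple, any proper submodule of $\widetilde{L}_{A_{\ell-1}^{(1)}}(\lambda_s)$ would yield a proper submodule of the tensor product, contradicting simplicity; therefore $\widetilde{L}_{A_{\ell-1}^{(1)}}(\lambda_s) \cong L_{A_{\ell-1}^{(1)}}(\lambda_s)$, and we obtain the claimed decomposition of $M_{\ell}^{(s)}$.

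All the substantive work is packaged into Theorem \ref{prosta-0}, so I do not expect any serious obstacle at this stage. The one point requiring mild care is the equivalence of $M_{\ell}^{(0)}$- and $\widehat{\mathfrak{gl}(\ell)}$-submodule structures used in the first paragraph, but this follows directly from the explicit generation of $M_{\ell}^{(0)}$ as a vertex algebra by the affine and Heisenberg generators.
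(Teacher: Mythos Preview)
Your proof is correct and follows essentially the same route as the paper: combine Proposition \ref{ired-1} and Theorem \ref{prosta-0} to get irreducibility over $L_{A_{\ell-1}^{(1)}}(-\Lambda_0)\otimes M(1)$, then invoke Lemma \ref{lowest-1} to read off the explicit highest weight. Your additional remarks on the equivalence of $M_{\ell}^{(0)}$- and $\widehat{\mathfrak{gl}(\ell)}$-submodule lattices, and on why the $\widetilde{L}$ factor must be the simple $L$, simply make explicit details that the paper leaves to the reader.
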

 \begin{proof}
 Theorem \ref{prosta-0} and Proposition \ref{ired-1} give that $M_{\ell} ^{(s)}$ is an irreducible
 $L_{ A_{\ell -1} ^{(1)} }(-\Lambda_0) \otimes M(1)$--module. Now assertion follows from Lemma \ref{lowest-1}.
 \end{proof}

\section{Affine vertex algebras of type $A$ and level $-1$ }

\label{sect-aff-1} Our proof of main result (Theorem \ref{prosta-0})
will use the representation theory of the vertex algebra
$\widetilde{L}_{A_{ \ell -1} ^{(1)}} (-\Lambda_0)$ realized as a
subalgebra of the Weyl vertex algebra $M_{\ell}$. In this section we
shall see that $\widetilde{L}_{A_{ \ell -1} ^{(1)}} (-\Lambda_0)$ is
a certain quotient of the vertex algebra $V_{\ell -1,1}$ studied in
\cite{AP}. Then the classification  result for $V_{\ell-1,1}$ will
automatically give the classification of irreducible
$\widetilde{L}_{A_{ \ell -1} ^{(1)}} (-\Lambda_0)$--modules.  (It is
also possible that  $\widetilde{L}_{A_{ \ell -1} ^{(1)}}
(-\Lambda_0) \cong V_{\ell -1,1}$).

So we shall first  recall some of our previous results from
\cite{AP} on affine vertex algebras of type $A$ and level $-1$, and
apply them to the present situation.

In \cite{AP}, we studied the following quotient of universal affine
vertex algebra $N_{ A_{\ell -1} ^{ (1)} } (- \Lambda_0)$, denoted by
$V_{\ell -1,1}$ in that paper:
$$V_{\ell -1,1} = \frac{N_{ A_{\ell -1} ^{ (1)} } (- \Lambda_0)}{< v_{\ell -1,1} >}, $$
where $< v_{\ell -1,1} >$ denotes the ideal generated by singular
vector
$$v_{\ell -1,1} = \left\{ \begin{array}{cc}
                                          e_{\epsilon_1 - \epsilon_{\ell}}(-1)e_{\epsilon_2 -
\epsilon_{\ell-1}}(-1) {\bf 1} - e_{\epsilon_2 - \epsilon_{\ell}}(-1)e_{\epsilon_1 - \epsilon_{\ell -1}}(-1) {\bf 1}, &   \ \ell \ge 4 \\
                                           e_{\epsilon_2 - \epsilon_3}(-1)^2 e_{\epsilon_1 -
\epsilon_2}(-1) {\bf 1} - e_{\epsilon_1 - \epsilon_3}(-1)
e_{\epsilon_2 - \epsilon_3}(-1)h_{\epsilon_1 -
\epsilon_2}(-1){\bf 1} \\
 - e_{\epsilon_1 -
\epsilon_3}(-1)^2 f_{\epsilon_1 - \epsilon_2}(-1){\bf 1}, & \ \ell
=3
                                        \end{array}  \right .$$
in $N_{ A_{\ell -1} ^{ (1)} } (- \Lambda_0)$.

Using the method for classification of representations of affine
vertex algebras based on singular vectors (from \cite{Ad-1994},
\cite{AM}, \cite{MP}), we obtained the following result (see Theorem
5.7 and Corollary 6.7 from \cite{AP}):
\begin{proposition} \cite{AP} \label{prop-class-JA-V}
Let $\ell \geq 3$.  The set
$$ \{ L_{ A_{\ell-1} ^{(1)} }(-(s+1) \Lambda_0 + s \Lambda_1), \ L_{ A_{\ell-1} ^{(1)} }(-(s+1) \Lambda_0 + s \Lambda_{\ell -1}) \ \vert \ s \in {\Zp}\} $$
provides a complete list of irreducible ordinary $V_{\ell
-1,1}$--modules.
\end{proposition}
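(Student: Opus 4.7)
The plan is to derive this classification via the singular--vector method of \cite{Ad-1994,AM,MP}. Since $V_{\ell-1,1} = N_{A_{\ell-1}^{(1)}}(-\Lambda_0)/\langle v_{\ell-1,1}\rangle$, every irreducible ordinary $V_{\ell-1,1}$--module is an irreducible ordinary $\hg _{A_{\ell-1}}$--module of level $-1$ on which the image of the singular vector $v_{\ell-1,1}$ acts as zero. I would first invoke the Zhu/Frenkel--Zhu correspondence to turn this into a finite-dimensional problem: irreducible ordinary $V_{\ell-1,1}$--modules correspond bijectively to finite-dimensional irreducible modules over the Zhu algebra $A(V_{\ell-1,1}) = U(\g _{A_{\ell-1}})/J$, where $J$ is the two-sided ideal generated by the Zhu image $[v_{\ell-1,1}]$. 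The task thus reduces to determining for which dominant integral $\g _{A_{\ell-1}}$--weights $\bar\mu$ the element $[v_{\ell-1,1}]$ annihilates $V_{A_{\ell-1}}(\bar\mu)$; each such $\bar\mu$ then yields the ordinary $V_{\ell-1,1}$--module $L_{A_{\ell-1}^{(1)}}(-\Lambda_0 + \bar\mu)$.

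Next I would compute $[v_{\ell-1,1}]$ explicitly. For $\ell\ge 4$ the singular vector is quadratic and, up to terms that vanish on any highest-weight vector, its Zhu image is
$$ u \;=\; e_{\epsilon_1-\epsilon_\ell}\, e_{\epsilon_2-\epsilon_{\ell-1}} \;-\; e_{\epsilon_2-\epsilon_\ell}\, e_{\epsilon_1-\epsilon_{\ell-1}} \in U(\g _{A_{\ell-1}}), $$
in which the two summands are products of commuting root vectors, since $(\epsilon_1{+}\epsilon_2)-(\epsilon_{\ell-1}{+}\epsilon_\ell)$ is not a root of $\g _{A_{\ell-1}}$. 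The case $\ell = 3$ requires extracting a cubic element from the singular vector given in the statement; the bookkeeping is heavier but the strategy is parallel.

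The remaining, and main, step is to identify those $\bar\mu$ for which $u$ annihilates $V_{A_{\ell-1}}(\bar\mu)$. The easy direction is transparent: on $V_{A_{\ell-1}}(s\omega_1)\cong S^s(\C ^\ell)$ both summands of $u$ effect the same substitution (replace one $v_{\ell-1}$ by $v_2$ and one $v_\ell$ by $v_1$), so their difference vanishes, and $\bar\mu = s\omega_{\ell-1}$ is handled by the diagram involution $i \mapsto \ell-i$. The main obstacle is the converse: ruling out every other dominant integral weight. I would do this by testing $u$ on explicit weight vectors --- for instance on $V_{A_{\ell-1}}(\omega_2) = \Lambda^2 \C ^\ell$ one computes $u(v_{\ell-1}\wedge v_\ell) = -2\,v_1\wedge v_2 \neq 0$, so $u$ acts nontrivially on $V_{A_{\ell-1}}(\omega_2)$ --- and then running a case analysis on the Dynkin labels of $\bar\mu$ to show that whenever $\bar\mu$ involves some $\omega_j$ with $2\le j\le \ell-2$, or involves both $\omega_1$ and $\omega_{\ell-1}$ simultaneously, an analogous weight vector inside $V_{A_{\ell-1}}(\bar\mu)$ witnesses $u\neq 0$. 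Finally I would translate $\bar\mu = s\omega_1$ (resp.\ $s\omega_{\ell-1}$) into the affine setting via $\omega_i = \Lambda_i - \Lambda_0$, recovering the stated affine highest weights $-(s+1)\Lambda_0 + s\Lambda_1$ and $-(s+1)\Lambda_0 + s\Lambda_{\ell-1}$.
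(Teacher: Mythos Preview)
The paper does not prove this proposition; it is quoted from \cite{AP} (Theorem~5.7 and Corollary~6.7 there), and the paper only remarks that it was obtained ``using the method for classification of representations of affine vertex algebras based on singular vectors (from \cite{Ad-1994}, \cite{AM}, \cite{MP}).'' Your proposal is precisely a sketch of that method---pass to the Zhu algebra $A(V_{\ell-1,1})\cong U(\g_{A_{\ell-1}})/J$, identify the image of the singular vector, and determine which finite-dimensional $V_{A_{\ell-1}}(\bar\mu)$ it annihilates---so you are reconstructing the cited argument rather than offering an alternative.

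One remark on execution: in \cite{AP} (and in the general framework of \cite{Ad-1994,AM,MP}) the converse direction is not handled by the ad hoc case analysis you describe. Instead one lets the full $\g_{A_{\ell-1}}$--module $R=U(\g_{A_{\ell-1}})\cdot v_{\ell-1,1}$ act by zero modes on a highest weight vector $v_{\bar\mu}$; the resulting vectors, rewritten in a PBW basis, yield a system of polynomial equations in the Dynkin labels of $\bar\mu$ whose solution set is exactly $\{s\omega_1,\,s\omega_{\ell-1}:s\in\Zp\}$. Your direct test of $u$ on explicit vectors in $V_{A_{\ell-1}}(\bar\mu)$ is equivalent in principle, but to make it rigorous you would still need to exhibit, for every dominant integral $\bar\mu$ outside the list, a concrete vector not annihilated by $u$, which in practice amounts to the same polynomial computation.
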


 The following result is a  modification of Proposition \ref{prop-class-JA-V} which we  need in this paper.
\begin{proposition} \label{prop-class-JA}
Let $\ell \geq 3$.

\item[(i)] There is a surjective homomorphism of vertex algebras
$$ \widetilde{\Phi} : V_{\ell-1,1} \rightarrow \widetilde{L}_{A_{ \ell -1} ^{(1)}}
(-\Lambda_0).$$

\item[(ii)]  The set
$$ \{ L_{ A_{\ell-1} ^{(1)} }(-(s+1) \Lambda_0 + s \Lambda_1), \ L_{ A_{\ell-1} ^{(1)} }(-(s+1) \Lambda_0 + s \Lambda_{\ell -1}) \ \vert \ s \in {\Zp}\} $$
provides a complete list of irreducible ordinary $\widetilde{L}_{A_{
\ell -1} ^{(1)}} (-\Lambda_0)$--modules.
\end{proposition}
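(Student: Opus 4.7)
The plan is to first verify that the defining singular vector of $V_{\ell-1,1}$ maps to zero under $\Phi$, yielding the factorization in (i), and then to transfer the classification of irreducible ordinary modules from $V_{\ell-1,1}$ to $\widetilde{L}_{A_{\ell-1}^{(1)}}(-\Lambda_0)$ using (i) together with the concrete realization inside $M_\ell$ recorded in Lemma \ref{lowest-1}.

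For (i), the task reduces to checking $\Phi(v_{\ell-1,1})=0$ in $M_\ell^{(0)}$. Substituting (\ref{generatoriA}) for the root vectors, each summand of $v_{\ell-1,1}$ becomes a quartic normally ordered monomial in the modes $a_k^{\pm}(-\tfrac{1}{2})$. For $\ell\ge 4$ the four indices $1,2,\ell-1,\ell$ that appear are pairwise distinct, so by $[a_i^+(r), a_j^-(s)] = \delta_{r+s,0}\delta_{i,j}$ all the relevant creators commute; the two monomials coincide and their difference vanishes. The case $\ell=3$ requires the same style of calculation on the three-term expression, with somewhat more bookkeeping because the index $2$ occurs in several factors, but again the Weyl relations force the sum to collapse to zero. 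Once $\Phi(v_{\ell-1,1})=0$ is established, the universal property of the quotient $V_{\ell-1,1} = N_{A_{\ell-1}^{(1)}}(-\Lambda_0)/\langle v_{\ell-1,1}\rangle$ produces a homomorphism $\widetilde{\Phi}:V_{\ell-1,1}\to \widetilde{L}_{A_{\ell-1}^{(1)}}(-\Lambda_0)$ factoring $\Phi$, which is surjective because $\widetilde{L}_{A_{\ell-1}^{(1)}}(-\Lambda_0)=\mathrm{Im}(\Phi)$.

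For (ii), the surjection from (i) identifies every irreducible ordinary $\widetilde{L}_{A_{\ell-1}^{(1)}}(-\Lambda_0)$-module, by pullback along $\widetilde{\Phi}$, with an irreducible ordinary $V_{\ell-1,1}$-module; Proposition \ref{prop-class-JA-V} then places it in the stated list. For the reverse inclusion, I invoke Lemma \ref{lowest-1}: the $\widetilde{L}_{A_{\ell-1}^{(1)}}(-\Lambda_0)$-cyclic submodule generated by $a_1^+(-\tfrac{1}{2})^s{\1}$ inside $M_\ell^{(s)}$ (respectively $a_\ell^-(-\tfrac{1}{2})^s{\1}$ inside $M_\ell^{(-s)}$) is a highest-weight module of highest weight $\lambda_s$ (respectively $\mu_s$). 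Being a $\widetilde{L}_{A_{\ell-1}^{(1)}}(-\Lambda_0)$-module, it is annihilated by $\ker\widetilde{\Phi}$, and so is its unique irreducible quotient; but that quotient is an irreducible $V_{\ell-1,1}$-module of highest weight $\lambda_s$ (respectively $\mu_s$), hence by Proposition \ref{prop-class-JA-V} equals $L_{A_{\ell-1}^{(1)}}(\lambda_s)$ (respectively $L_{A_{\ell-1}^{(1)}}(\mu_s)$). Thus each module in the list descends to a $\widetilde{L}_{A_{\ell-1}^{(1)}}(-\Lambda_0)$-module, and the two lists coincide.

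The main obstacle is the explicit Weyl-algebra verification $\Phi(v_{\ell-1,1})=0$ in (i), particularly for $\ell=3$, where the singular vector is a three-term expression with overlapping indices rather than the clean two-term difference available for $\ell\ge 4$. Once (i) is in place, (ii) is a formal consequence of combining $\widetilde{\Phi}$ with Lemma \ref{lowest-1} and the classification result from \cite{AP}.
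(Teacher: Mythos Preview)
Your proposal is correct and follows essentially the same approach as the paper's proof: verify $\Phi(v_{\ell-1,1})=0$ in $M_\ell$ to obtain the factorization $\widetilde{\Phi}$, then combine Lemma~\ref{lowest-1} (to exhibit each listed module as a $\widetilde{L}_{A_{\ell-1}^{(1)}}(-\Lambda_0)$--module) with Proposition~\ref{prop-class-JA-V} (to bound the list from above via pullback along $\widetilde{\Phi}$). Your write-up is in fact slightly more detailed than the paper's, which simply asserts that the vanishing $\Phi(v_{\ell-1,1})=0$ ``can easily be verified'' without spelling out the commuting-creators argument for $\ell\ge 4$.
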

\begin{proof}

Using homomorphism $\Phi : N_{A_{ \ell -1} ^{(1)}}
(-\Lambda_0)\rightarrow \widetilde{L}_{A_{ \ell -1} ^{(1)}}
(-\Lambda_0) $ defined in Section \ref{sect-weyl-vertex} and
formulas for  singular vectors $ v_{\ell -1,1} $ one can easily
verify that $\Phi ( v_{\ell -1,1} ) = 0$  in $M_{\ell}$, for $\ell
\geq 3$. This implies that $$\mbox{Ker}(\Phi) \supset < v_{\ell
-1,1}
>, $$ and therefore we get homomorphism $\widetilde{\Phi} :
V_{\ell-1,1} \rightarrow \widetilde{L}_{A_{ \ell -1} ^{(1)}}
(-\Lambda_0)$.

 Lemma \ref{lowest-1} now implies that    $$  \widetilde{ L}_{ A_{\ell-1} ^{(1)} }(-(s+1) \Lambda_0 + s \Lambda_1), \ \widetilde{L}_{ A_{\ell-1} ^{(1)} }(-(s+1) \Lambda_0 + s \Lambda_{\ell -1}) \quad (s \in {\Zp}) $$
are $\widetilde{L}_{A_{ \ell -1} ^{(1)}} (-\Lambda_0)$--modules, and
therefore their simple quotients are also $\widetilde{L}_{A_{ \ell
-1} ^{(1)}} (-\Lambda_0)$--modules. Now assertion follows from
Proposition \ref{prop-class-JA-V} since every $\widetilde{L}_{A_{
\ell -1} ^{(1)}} (-\Lambda_0)$--module is also a $V_{\ell
-1,1}$--module.
\end{proof}

\section{Proof of Theorem \ref{prosta-0}}

\label{sect-irr-charge}

In this section we give the proof of Theorem \ref{prosta-0}.

The following lemma is a crucial technical result needed for the
fusion rules argument used in the proof of Theorem \ref{prosta-0}.

\begin{lemma} \label{vazna-1}
Let $r, s \in {\N}$, $r \geq s$ and $\ell \ge 2$. Then the following
tensor product decompositions hold:
\bea &(i)& V_{A_{\ell} } ( r \omega_1) \otimes V_{A_{\ell} } (s
\omega_1) = \nonumber \\ && V_{A_{\ell} } ( (r +s ) \omega_1) \oplus
V_{A_{\ell} } ( (r +s-2  ) \omega_1 + \omega_2)
\oplus \cdots \oplus V_{A_{\ell} } ( (r -s ) \omega_1 + s \omega_2), \nonumber \\
&(ii)& V_{A_{\ell} } ( r \omega_{\ell}) \otimes V_{A_{\ell} } (s
\omega_{\ell}) = \nonumber \\ && V_{A_{\ell} } ( (r +s )
\omega_{\ell} ) \oplus V_{A_{\ell} } ( (r +s-2  ) \omega_{\ell} +
\omega_{\ell -1})
\oplus \cdots \oplus V_{A_{\ell} } ( (r -s ) \omega_{\ell} + s \omega_{\ell-1}), \nonumber \\
 &(iii)& V_{A_{\ell} } ( r \omega_1) \otimes V_{A_{\ell} } (s \omega_{\ell}) = \nonumber \\ && V_{A_{\ell} } ( (r -s ) \omega_1) \oplus
 V_{A_{\ell} } ( (r -s+1  ) \omega_1 + \omega_{\ell})
\oplus \cdots \oplus V_{A_{\ell} } ( r \omega_1 + s \omega_{\ell}).
\nonumber
\eea

\end{lemma}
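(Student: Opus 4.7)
The plan is to recognize all three decompositions as instances of Pieri's rule (the special case of the Littlewood--Richardson rule in which one factor is a single-row Schur function). First I would fix the standard dictionary: the representation $V_{A_\ell}(r\omega_1)$ is the $r$-th symmetric power $S^r(\mathbb{C}^{\ell+1})$, corresponding to the one-row Young diagram $(r)$; and using the $SL_{\ell+1}$-isomorphism $V^{*}\cong \Lambda^{\ell}V$, the representation $V_{A_\ell}(s\omega_\ell) \cong S^s((\mathbb{C}^{\ell+1})^{*})$ corresponds to the rectangular diagram $(s^\ell) = (s,s,\ldots,s)$ with $\ell$ rows of length $s$. I would also record the translation between partition coordinates and fundamental weights: a partition $(\mu_1,\mu_2,\ldots,\mu_{\ell+1})$ gives the weight $\sum_{i=1}^{\ell}(\mu_i - \mu_{i+1})\omega_i$.

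For (i), Pieri's rule applied to $(r)\cdot(s)$ produces exactly the partitions obtained by appending $s$ boxes to the row $(r)$ so that no two new boxes lie in the same column. Because $r \geq s$, these are precisely the two-row partitions $(r+s-k,\,k)$ for $k=0,1,\ldots,s$, and the weight-translation above sends $(r+s-k,k)$ to $(r+s-2k)\omega_1 + k\omega_2$. This yields (i) directly. Part (ii) then follows from (i) by applying the Dynkin diagram automorphism $\omega_i \mapsto \omega_{\ell+1-i}$, or equivalently by dualizing both sides using $V_{A_\ell}(r\omega_1)^{*} \cong V_{A_\ell}(r\omega_\ell)$.

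For (iii), I would apply Pieri to $(r)\cdot(s^\ell)$. The key combinatorial observation is that a horizontal strip of size $r$ added to the rectangle $(s^\ell)$ is forced: since each row $i$ of the new partition must be at most $s$ for $i=2,\ldots,\ell$ (as row $i$ of the new partition is bounded by row $i-1$ of the old, which equals $s$), all new boxes in rows $2,\ldots,\ell$ are prohibited. Thus any valid $\mu$ has the form $(r+s-b,\,s,s,\ldots,s,\,b)$ with $0 \leq b \leq s$, the $r$ added boxes being distributed as $r-b$ in a new first row and $b$ in a new $(\ell+1)$-st row. The weight-translation sends this to $(r-b)\omega_1 + (s-b)\omega_\ell$; reparameterizing by $k = s-b \in \{0,1,\ldots,s\}$ gives the sum $\bigoplus_{k=0}^{s} V_{A_\ell}((r-s+k)\omega_1 + k\omega_\ell)$, matching the claim.

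The main obstacle is essentially bookkeeping: correctly identifying $V_{A_\ell}(s\omega_\ell)$ with the rectangle $(s^\ell)$ in the $SL_{\ell+1}$ setting (which uses the determinantal identification $V^{*}\cong \Lambda^{\ell}V$) and consistently converting between partition notation and fundamental weight notation. Once those conventions are pinned down, each of (i), (ii), (iii) reduces to a single application of Pieri's rule, and no further work is required.
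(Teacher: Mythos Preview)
Your argument is correct. Pieri's rule applied to $(r)\otimes(s)$, $(r)\otimes(s^\ell)$, and duality handle (i), (iii), and (ii) exactly as you describe; the bookkeeping between partitions $(\mu_1,\dots,\mu_{\ell+1})$ and fundamental weights via $\sum_i(\mu_i-\mu_{i+1})\omega_i$ is right, and the horizontal-strip constraint $\lambda_{i-1}\ge\mu_i\ge\lambda_i$ does force $\mu_2=\cdots=\mu_\ell=s$ in case (iii). One cosmetic point: in (iii) you speak of ``$r-b$ in a new first row''; the first row is not new, only extended, while the $(\ell+1)$-st row is genuinely new---but the formula $(r+s-b,s,\dots,s,b)$ is correct.

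The paper does not give a detailed proof of this lemma. It only records that the weight spaces of $V_{A_\ell}(r\omega_1)$ and $V_{A_\ell}(r\omega_\ell)$ are one-dimensional, remarks that the decompositions follow from Kostant's multiplicity formula, and cites Okada for analogous tensor-product identities. Your approach via Pieri is more elementary and self-contained: it avoids the alternating Weyl-group sum in Kostant's formula and instead exploits the very special shape of the partitions involved. The paper's route, by contrast, appeals to a general-purpose theorem and leaves the actual extraction of the summands to the reader. Both are valid; yours is the cleaner argument for this particular statement.
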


Note that all weight subspaces of modules $V_{A_{\ell} } ( r
\omega_{1})$ and  $V_{A_{\ell} } ( r \omega_{\ell})$  are
$1$--dimensional, and decompositions can be obtained by using
Kostant formula \cite{Ko}. Decompositions of this type also appeared
in \cite{Okada}.

\vskip 5mm

\noindent {\bf Proof of Theorem \ref{prosta-0}:}

\vskip 3mm

First we notice that   $M_{\ell} ^{(0)}$ is completely reducible
$M(1)$--module, isomorphic to an infinite direct sum of copies of
$M(1)$.

Assume now that $ \widetilde{L}_{A_{ \ell -1} ^{(1)}} (-\Lambda_0)
\otimes M(1) $ is a proper submodule of $M_{\ell} ^{(0)}$. Then we
consider $M_{\ell} ^{(0)}$ as a $ \widetilde{L}_{A_{ \ell -1}
^{(1)}} (-\Lambda_0) \otimes M(1)$--module and conclude that there
exists a singular vector $\Omega$  which is not proportional to
${\bf 1}$. Applying the classification result from Proposition
\ref{prop-class-JA}, we may assume that  $\Omega$ has weight
$\lambda_r$ or   $\mu_r$ for certain $r \in {\N}$.

Assume first that $\Omega$ has weight $\lambda_r$. Then $U(\g _{A_{
\ell -1}}). \Omega \cong V_{A_{\ell -1}}(r \omega_1)$.

Next we consider $M_{\ell} ^{(0)}$--module $M_{\ell} ^{(- r)}$ which
is generated by vector $ a_{\ell} ^{-} (-1/ 2) ^{ r} {\bf 1}$.

Since $\Omega$ and $ a_{\ell} ^{-} (-1/ 2) ^{ r} {\bf 1}$ belong to
the simple vertex algebra $M_{\ell}$, we conclude that

$$Y(\Omega,z)  a_{\ell} ^{-} (-1/ 2) ^{ r} {\bf 1} \ne 0. $$
Take $n_0$ such that
$$ \Omega_{n_0}  a_{\ell} ^{-} (-1/ 2) ^{ r} {\bf 1} \ne 0, \quad \Omega_n   a_{\ell} ^{-} (-1/ 2) ^{ r} {\bf 1} = 0 \ \mbox{for} \ n > n_0. $$
Then $$U = U(\g _{A_{ \ell -1}}). \Omega_{n_0}  a_{\ell} ^{-} (-1/
2) ^{ r} {\bf 1}$$ is a $U(\g _{A_{ \ell -1}})$--module which is the
lowest component of $\widetilde{L}_{A_{ \ell -1} ^{(1)}}
(-\Lambda_0)$--module $\widetilde{U} = U(\hg _{A_{ \ell -1}}).
\Omega_{n_0} a_{\ell} ^{-} (-1/ 2) ^{ r} {\bf 1}$. Then Lemma
\ref{vazna-1} (iii) and the classification result from Proposition
\ref{prop-class-JA} give that $U$ is $1$--dimensional. Relation
(\ref{rel-lowest-confw}) now implies that $M_{\ell} ^{(-r)}$
contains a non-trivial singular vector of conformal weight
$-\frac{r^2}{2 \ell}$. This is a contradiction since $M_{\ell}
^{(-r)}$ does not have vectors at this conformal weight (lowest
conformal weight is $r/2$).

Similarly we treat the case if $\Omega$ has weight $\mu_r$.

In this way we have proved that $M_{\ell} ^{(0)} \cong \widetilde{L}_{A_{ \ell -1} ^{(1)}} (-\Lambda_0) \otimes M(1).$

\section{Some applications}
\label{sect-semisimple}

In this section we shall present some direct applications of the
results from previous sections. We shall complete the classification
process for simple affine vertex operator algebras $L_{ A_{\ell -1}
^{(1)} }(-\Lambda_0)$ and $L_{ C_{\ell} ^{(1)}} (-\Lambda_0)$
started in papers \cite{A2}, \cite{AP} and \cite{AP2}.

\subsection{Fusion rules for $L_{ A_{\ell -1} ^{(1)} }(-\Lambda_0)$--modules }

As a consequence of our construction and Proposition
\ref{prop-class-JA}, we get the following classification result.

\begin{theorem}
Let $\ell \geq 3$.  The set
$$ \{ L_{ A_{\ell-1} ^{(1)} }(-(s+1) \Lambda_0 + s \Lambda_1), \ L_{ A_{\ell-1} ^{(1)} }(-(s+1) \Lambda_0 + s \Lambda_{\ell -1}) \ \vert \ s \in {\Zp}\} $$
provides a complete list of irreducible ordinary $L_{ A_{\ell -1} ^{(1)} }(-\Lambda_0)$--modules.

\end{theorem}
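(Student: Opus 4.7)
The plan is to deduce this classification directly from Proposition \ref{prop-class-JA}(ii) by identifying the two vertex operator algebras $\widetilde{L}_{A_{\ell -1}^{(1)}}(-\Lambda_0)$ and $L_{A_{\ell-1}^{(1)}}(-\Lambda_0)$. All the technical content has already been prepared: Proposition \ref{prop-class-JA}(ii) classifies irreducible ordinary modules for $\widetilde{L}_{A_{\ell -1}^{(1)}}(-\Lambda_0)$, so everything reduces to showing that the tilded algebra is in fact the simple affine vertex operator algebra at level $-1$.

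First I would recall that $\widetilde{L}_{A_{\ell-1}^{(1)}}(-\Lambda_0)$ was defined in (\ref{L-tilda}) as the image of the vertex algebra homomorphism $\Phi : N_{A_{\ell-1}^{(1)}}(-\Lambda_0) \to M_{\ell}^{(0)}$ coming from the universal property of $N_{A_{\ell-1}^{(1)}}(-\Lambda_0)$. Thus $\widetilde{L}_{A_{\ell-1}^{(1)}}(-\Lambda_0)$ is a nonzero quotient of the universal affine vertex algebra at level $-1$, and in particular its unique simple quotient is $L_{A_{\ell-1}^{(1)}}(-\Lambda_0)$.

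Next I would invoke the ``In particular'' clause of Theorem \ref{prosta-0}, which asserts that $\widetilde{L}_{A_{\ell-1}^{(1)}}(-\Lambda_0) = U(\hg_{A_{\ell-1}}).\1$ is a simple vertex operator algebra for $\ell \ge 3$. (This simplicity is an immediate consequence of the factorization $M_{\ell}^{(0)} \cong \widetilde{L}_{A_{\ell-1}^{(1)}}(-\Lambda_0) \otimes M(1)$ together with the simplicity of $M_{\ell}^{(0)}$ as a vertex subalgebra of $M_{\ell}$ proved in Proposition \ref{ired-1}.) A simple quotient of $N_{A_{\ell-1}^{(1)}}(-\Lambda_0)$ must coincide with the maximal such quotient, giving the canonical isomorphism
$$\widetilde{L}_{A_{\ell-1}^{(1)}}(-\Lambda_0) \cong L_{A_{\ell-1}^{(1)}}(-\Lambda_0)$$
of vertex operator algebras. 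Substituting this identification into Proposition \ref{prop-class-JA}(ii) yields the claimed complete list of irreducible ordinary $L_{A_{\ell-1}^{(1)}}(-\Lambda_0)$--modules.

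No genuine obstacle arises at this stage; the final step is purely formal, since the difficult ingredients, namely the fusion rules analysis in the proof of Theorem \ref{prosta-0} and the singular vector classification of \cite{AP} used in Proposition \ref{prop-class-JA}, have already done the work. In particular the hypothesis $\ell \ge 3$ is inherited from Theorem \ref{prosta-0} and from the \cite{AP} classification, and cannot be relaxed without a different argument.
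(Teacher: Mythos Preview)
Your proposal is correct and follows exactly the paper's approach: the paper presents this theorem with no separate proof, stating only that it is ``a consequence of our construction and Proposition \ref{prop-class-JA}.'' Your writeup simply unpacks this one-line justification, identifying $\widetilde{L}_{A_{\ell-1}^{(1)}}(-\Lambda_0)$ with $L_{A_{\ell-1}^{(1)}}(-\Lambda_0)$ via the simplicity clause of Theorem \ref{prosta-0} and then invoking Proposition \ref{prop-class-JA}(ii).
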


We are also able to describe the fusion rules between irreducible
$L_{ A_{\ell -1} ^{(1)} }(-\Lambda_0)$--modules, for $\ell \geq 3$.

For every $s \in \Z$ we set $\pi_s = \left\{ \begin{array}{cc}
                                          L_{ A_{\ell -1} ^{(1)} }(-(s+1) \Lambda_0 + s \Lambda_1) &  \mbox{if} \ s \ge 0 \\
                                           L_{ A_{\ell -1} ^{(1)} } ((s-1) \Lambda_0 -  s \Lambda_{\ell -1} ) & \mbox{if} \ s < 0
                                        \end{array}  \right .$

Next theorem will show that the fusion algebra generated by
irreducible (ordinary) $L_{ A_{\ell -1}^{(1)}}(-\Lambda_0)$--modules
is isomorphic to the group algebra ${\C}[\Z]$.
\begin{theorem}
Assume that $i, j, k \in \Z$. The there exists a non-trivial intertwining operator of type
$$ { \pi_k \choose \pi_i \ \ \pi_j } $$
if and only if $k = i+j$
\end{theorem}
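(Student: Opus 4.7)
The plan is to match a lower bound on $\dim I\binom{\pi_k}{\pi_i\ \pi_j}$ coming from the free-field realization with an upper bound coming from the Frenkel--Zhu bimodule theorem combined with Lemma \ref{vazna-1}.

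For existence when $k=i+j$, the key observation is that the vertex operator $Y$ of the simple vertex algebra $M_\ell$ respects the charge grading, so it restricts to a non-zero $M_\ell^{(0)}$-intertwining operator of type $\binom{M_\ell^{(i+j)}}{M_\ell^{(i)}\ M_\ell^{(j)}}$, non-vanishing being guaranteed by the simplicity of $M_\ell$ (every $v\in M_\ell^{(j)}\setminus\{0\}$ generates all of $M_\ell$, hence some $u\in M_\ell^{(i)}$ acts non-trivially on it). Under the isomorphisms $M_\ell^{(0)}\cong L_{A_{\ell-1}^{(1)}}(-\Lambda_0)\otimes M(1)$ (Theorem \ref{prosta-0}) and $M_\ell^{(s)}\cong\pi_s\otimes M(1,s)$ (Theorem \ref{irr-charge}), one invokes the standard factorization of intertwiners for tensor-product vertex algebras together with the fact that $\dim I\binom{M(1,i+j)}{M(1,i)\ M(1,j)}=1$ for the Heisenberg algebra. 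This forces the intertwiner to decompose as a tensor of a non-zero $L_{A_{\ell-1}^{(1)}}(-\Lambda_0)$-intertwiner of type $\binom{\pi_{i+j}}{\pi_i\ \pi_j}$ with a Heisenberg intertwiner, yielding the lower bound.

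For the upper bound, the plan is to apply the Frenkel--Zhu bimodule theorem to bound $\dim I\binom{\pi_k}{\pi_i\ \pi_j}$ by the multiplicity of $V_{A_{\ell-1}}(\bar\lambda_k)$ inside the classical tensor product $V_{A_{\ell-1}}(\bar\lambda_i)\otimes V_{A_{\ell-1}}(\bar\lambda_j)$, where $\bar\lambda_s=s\omega_1$ for $s\geq 0$ and $\bar\lambda_s=|s|\omega_{\ell-1}$ for $s<0$. Lemma \ref{vazna-1}, applied in parts (i), (ii), (iii) according to the signs of $i$ and $j$, produces a multiplicity-free decomposition. By the classification in Proposition \ref{prop-class-JA}, the classical parts of the irreducible ordinary $L_{A_{\ell-1}^{(1)}}(-\Lambda_0)$-modules are exactly the $\bar\lambda_s$. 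Since all other summands in the decomposition involve the fundamental weights $\omega_2$ or $\omega_{\ell-2}$, the only summand that can serve as the lowest component of an ordinary $\pi_k$ is $V_{A_{\ell-1}}(\bar\lambda_{i+j})$, which appears with multiplicity one. This forces $\dim I\binom{\pi_k}{\pi_i\ \pi_j}\leq 1$ with equality possible only for $k=i+j$, matching the lower bound.

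The main technical hurdle will be the uniform handling of the three sign configurations of $(i,j)$; the mixed case in particular requires the $\omega_1\leftrightarrow\omega_{\ell-1}$ duality to accommodate the ordering hypothesis $r\geq s$ in Lemma \ref{vazna-1}(iii). A secondary but easily dispatched point is checking that the Frenkel--Zhu bound transfers from the universal to the simple quotient; this is automatic since every ordinary $L_{A_{\ell-1}^{(1)}}(-\Lambda_0)$-module is also a module for the universal affine vertex algebra, and the bound depends only on the action of the Zhu bimodule on the lowest component.
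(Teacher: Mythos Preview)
Your proposal is correct and follows essentially the same approach as the paper. The paper's own proof is extremely terse: existence is obtained by restricting $Y_{M_\ell}$ to the charge components, and the converse is dismissed as following ``easily from Lemma~\ref{vazna-1} by using standard fusion rules arguments''; your write-up simply unpacks these two sentences, making explicit the Frenkel--Zhu bound on the top components and the factorization through the Heisenberg tensor factor that the paper leaves implicit.
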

\begin{proof}
Let $Y_{M_{\ell}}(\cdot,z)$ be the vertex operator map which defines on $M_{\ell}$ the structure of vertex operator algebra.
Required  non-trivial intertwining operators are realized by restriction, as follows:
$$ I(u,z ) v = Y_{M_{\ell}} (u, z) v \qquad (u \in M_{\ell} ^{(i)}, v \in M_{\ell} ^{(j)} ). $$
The rest of statement easily follows from Lemma \ref{vazna-1} by using standard fusion rules arguments.
\end{proof}

\begin{remark} Vertex operator algebra $L_{ A_{\ell
-1}^{(1)}}(-\Lambda_0)$ is not rational in the category ${\mathcal
O}$, unlike admissible affine vertex operator algebras (cf. \cite{Ad-1994}, \cite{AM},
\cite{Ara}, \cite{P3}, \cite{P4}). Modules constructed in Remark 5.8 of \cite{AP} are
examples of $L_{ A_{\ell -1}^{(1)}}(-\Lambda_0)$--modules from
category ${\mathcal O}$ that are not completely reducible.
\end{remark}

\subsection{  $ M_{2 \ell} $ as a  $C_{\ell} ^{(1)}$--module}
Next consequence of the construction from Section \ref{sect-irr-charge} is the complete reducibility of  $ M_{2 \ell} $ as a module for
affine Lie algebra $C_{\ell} ^{(1)}$.

By using conformal embedding  of $L_{ C_{\ell} ^{(1)}} (-\Lambda_0)$
into  ${L}_{A_{ 2 \ell -1} ^{(1)}}(-\Lambda_0)$, the classification
results from \cite{A2}, \cite{AP2}  and Proposition 7 from \cite{AP2} we get:

\begin{corollary}

\item[(i)] Let $\ell \geq 3$. The set
$$ \{ L_{ C_{\ell} ^{(1)} } (-(s+1) \Lambda_0 + s \Lambda_1 ) \ \vert \ s \in {\Zp} \} \cup \{ L_{C_{\ell} ^{(1)} } (-2 \Lambda_0 + \Lambda_2 ) \} $$
 provides a complete list of irreducible ordinary $L_{ C_{\ell} ^{(1)}
 }(-\Lambda_0)$--modules.

\item[(ii)] $M_{2 \ell}$ is a completely reducible $L_{ C_{\ell} ^{(1)}} (-\Lambda_0)$--module.

\end{corollary}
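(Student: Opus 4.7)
\medskip

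\noindent\textbf{Proof proposal.} The plan is to transport the decomposition of $M_{2\ell}$ obtained in Theorem \ref{irr-charge} across the conformal embedding $L_{C_{\ell}^{(1)}}(-\Lambda_0) \hookrightarrow L_{A_{2\ell-1}^{(1)}}(-\Lambda_0)$, and then combine this with the existing classification results of \cite{A2}, \cite{AP2} to close off the list in (i).

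First I would start from Theorem \ref{irr-charge}, which gives a decomposition
$$ M_{2\ell} \;\cong\; \bigoplus_{s \in \Z} L_{A_{2\ell-1}^{(1)}}(\lambda_s) \otimes M(1,s) $$
as an $L_{A_{2\ell-1}^{(1)}}(-\Lambda_0) \otimes M(1)$-module, with each summand irreducible. Because $L_{C_{\ell}^{(1)}}(-\Lambda_0)$ sits inside $L_{A_{2\ell-1}^{(1)}}(-\Lambda_0)$ as a conformal vertex subalgebra, restriction to $L_{C_{\ell}^{(1)}}(-\Lambda_0) \otimes M(1)$ preserves $L(0)$-gradings, so every $L(0)$-weight space stays finite-dimensional and each summand is an ordinary module over the subalgebra. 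Proposition 7 of \cite{AP2} then furnishes, for each $s$, an explicit decomposition of $L_{A_{2\ell-1}^{(1)}}(\lambda_s)$ into a finite (or locally finite) direct sum of irreducible $L_{C_{\ell}^{(1)}}(-\Lambda_0)$-modules appearing in the list of (i). Summing over $s \in \Z$ yields assertion (ii): $M_{2\ell}$ is a completely reducible $L_{C_{\ell}^{(1)}}(-\Lambda_0)$-module.

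For (i), the displayed list of modules is produced explicitly by the above decomposition (the highest-weight vectors $a_1^+(-\tfrac12)^s\mathbf{1}$ from Lemma \ref{lowest-1}, once branched from $A_{2\ell-1}$ to $C_{\ell}$, give rise to the $L_{C_{\ell}^{(1)}}(-(s+1)\Lambda_0+s\Lambda_1)$ summands, while the module $L_{C_{\ell}^{(1)}}(-2\Lambda_0+\Lambda_2)$ arises as the additional branching component on the charge-zero piece, as identified in \cite{AP2}). To conclude exhaustiveness, I would invoke the singular-vector classification results of \cite{A2} and \cite{AP2}: one checks that the maximal ideal of $N_{C_\ell^{(1)}}(-\Lambda_0)$ defining $L_{C_\ell^{(1)}}(-\Lambda_0)$ is generated by the singular vectors identified there, and applying those vectors to the top level of an arbitrary irreducible ordinary module forces its highest weight to lie in the stated finite union of families.

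The main obstacle is the exhaustiveness direction in (i); the constructive direction and the complete reducibility in (ii) fall out of the $A$-type decomposition together with Proposition 7 of \cite{AP2}, but ruling out further irreducible ordinary modules requires the singular-vector analysis of \cite{A2}, \cite{AP2}—and in particular the verification that no highest weight outside the listed families can annihilate all the known singular vectors of the defining maximal ideal. Once that input is in hand, both (i) and (ii) follow by the restriction argument above.
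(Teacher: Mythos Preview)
Your proposal is correct and matches the paper's approach: the paper also derives the corollary directly from the conformal embedding $L_{C_\ell^{(1)}}(-\Lambda_0)\hookrightarrow L_{A_{2\ell-1}^{(1)}}(-\Lambda_0)$, the classification results of \cite{A2}, \cite{AP2}, and Proposition~7 of \cite{AP2}, applied to the $A$-type decomposition of $M_{2\ell}$ established in Theorem~\ref{irr-charge}. Your write-up simply unpacks the one-line citation the paper gives before the corollary.
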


\subsection{  On $\mathcal{W}_{1 + \infty}$-algebra}

 Let $\mathcal{W}_{1 + \infty, -\ell}$ denote the simple $\mathcal{W}_{1 + \infty}$ vertex algebra with central charge $c=-\ell$. Then (cf. \cite{KR})

 $$\mathcal{W}_{1 + \infty, -\ell} = M_{\ell} ^{ \mathfrak{gl}(\ell) }.$$

 Our results show that

$$\mathcal{W}_{1 + \infty, -\ell} = \left( L_{ A_{\ell -1} ^{(1)} }(-\Lambda_0) \otimes M(1) \right) ^{ \mathfrak{gl}(\ell) }
\cong L_{ A_{\ell -1} ^{(1)} }(-\Lambda_0)  ^{ \mathfrak{sl}(\ell) } \otimes M(1) $$
if $\ell \ge 3$. (For $\ell \le 2$ this is not true).

 Therefore, $ \mathcal{W}_{1 + \infty, -\ell} $ is an affine orbifold.

\begin{remark} As far as we understand, the above result is new in the mathematical literature.
Note also that this result is in complete agreement with result from
\cite{FKRW}:
 $$\mathcal{W}_{1 + \infty, \ell} = \left( L_{ A_{\ell -1} ^{(1)} }(\Lambda_0) \otimes M(1) \right) ^{ \mathfrak{gl}(\ell) }
 \cong L_{ A_{\ell -1} ^{(1)} }(\Lambda_0)  ^{ \mathfrak{sl}(\ell) } \otimes M(1). $$
\end{remark}

\section{  Fusion algebra for certain  $D_{2 \ell-1}^{(1)}$--modules  of negative levels}
\label{sect-fusionD}

 Let $N_{ D_{\ell} ^{ (1)} } (k \Lambda_0)$ be the universal affine vertex algebra of level
$k$ associated to the affine Lie algebra of type $D_{\ell} ^{(1)}$.
It was proved in \cite{P2} that the vector
\begin{equation} \label{rel-singvD-lowest}
v = \sum _{i=2}^{\ell} e_{\epsilon_1 - \epsilon_i}(-1) e_{\epsilon_1
+ \epsilon_i}(-1) {\bf 1}
\end{equation}
is a singular vector in $N_{D_{\ell} ^{(1)}}((-\ell+2)\Lambda_0)$.
The classification of representations of vertex operator algebra
$${\mathcal V}_{D_{\ell} ^{(1)}}((-\ell+2)\Lambda_0)= \frac{N_{D_{\ell} ^{(1)}}((-\ell+2)\Lambda_0)}
{< v > },$$
where $< v >$ is the ideal generated by singular vector $v$, was
studied in that paper. We will recall the classification of all
irreducible ordinary modules for ${\mathcal V}_{D_{\ell}
^{(1)}}((-\ell+2)\Lambda_0)$.

It follows from \cite{P2} that:
\begin{proposition} \label{class-d}
The set
$$ \{ L_{ D_{ \ell} ^{(1)} } (-(s+\ell -2) \Lambda _0 + s\Lambda_{\ell -1} ), L _{ D_{\ell} ^{(1)} } (-(s+\ell -2) \Lambda _0 + s\Lambda_{\ell} )
\ \vert \ s \in {\Zp} \}$$
gives all irreducible ordinary ${\mathcal V}_{D_{\ell}
^{(1)}}((-\ell+2)\Lambda_0)$--modules.
\end{proposition}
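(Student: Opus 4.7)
The plan is to adapt the Zhu algebra / singular-vector classification method developed in \cite{Ad-1994}, \cite{AM}, \cite{MP} and applied in the type-$A$ setting in \cite{AP}. By Frenkel--Zhu theory, every irreducible ordinary $\mathcal{V}_{D_{\ell}^{(1)}}((-\ell+2)\Lambda_0)$--module is determined by its top (lowest conformal weight) component, which is a finite-dimensional irreducible module for the Zhu algebra $A(\mathcal{V}_{D_{\ell}^{(1)}}((-\ell+2)\Lambda_0))$. Since $A(N_{D_\ell^{(1)}}((-\ell+2)\Lambda_0))\cong U(\g_{D_\ell})$ and passing to the quotient only adds the relation coming from $v$, we obtain
$$
A(\mathcal{V}_{D_{\ell}^{(1)}}((-\ell+2)\Lambda_0))\;\cong\;U(\g_{D_\ell})\,/\,\langle [v]\rangle,
$$
where $[v]$ is the image of the singular vector (\ref{rel-singvD-lowest}) under Zhu's map and $\langle [v]\rangle$ is the two-sided ideal it generates. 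Thus the classification reduces to listing those dominant integral $\mu$ for which $[v]$ acts as zero on $V_{D_\ell}(\mu)$, and then identifying the corresponding affine highest weight $\lambda$ at level $-\ell+2$.

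Computing $[v]$ is the first concrete step. Since $v$ has conformal weight $2$ and the sum $(\epsilon_1-\epsilon_i)+(\epsilon_1+\epsilon_i)=2\epsilon_1$ is not a root of $D_\ell$, the bracket $[e_{\epsilon_1-\epsilon_i},e_{\epsilon_1+\epsilon_i}]=0$, and a direct application of the defining formula for Zhu's product gives
$$
[v]\;=\;\sum_{i=2}^{\ell}e_{\epsilon_1-\epsilon_i}\,e_{\epsilon_1+\epsilon_i}\;\in\;U(\g_{D_\ell}).
$$
Because $[v]$ has nonzero weight $2\epsilon_1$, it automatically annihilates the highest weight vector $v_\mu$, so the real constraints are extracted by evaluating $[v]$ on vectors of the form $f_{\beta_1}\cdots f_{\beta_r}v_\mu$ and commuting the raising operators past the lowering operators using the $\g_{D_\ell}$ relations. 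The orthogonality of the root pairs $\epsilon_1\pm\epsilon_i$ and $\epsilon_1\pm\epsilon_j$ for $i\ne j$ keeps the commutator algebra manageable, and a finite case analysis shows that the resulting polynomial identities in $\mu_i=\langle\mu,\alpha_i^\vee\rangle$ are satisfied only when $\mu_1=\cdots=\mu_{\ell-2}=0$ and at most one of $\mu_{\ell-1},\mu_\ell$ is nonzero; that is, when $\mu=s\omega_{\ell-1}$ or $\mu=s\omega_\ell$ for some $s\in\Zp$.

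Finally, for each such $\mu$ the unique affine highest weight $\lambda$ satisfying $\bar\lambda=\mu$ and $\lambda(c)=-\ell+2$ is $\lambda=-(s+\ell-2)\Lambda_0+s\Lambda_{\ell-1}$ or $\lambda=-(s+\ell-2)\Lambda_0+s\Lambda_\ell$, yielding precisely the modules in the statement. Conversely, each $L_{D_\ell^{(1)}}(\lambda)$ on the list is an ordinary $\mathcal{V}_{D_\ell^{(1)}}((-\ell+2)\Lambda_0)$--module because $[v]$ annihilates its top and the ideal generated by $v$ meets the irreducible module $L(\lambda)$ in a proper, hence zero, submodule. The main obstacle is the case analysis in the middle step: one must verify carefully that no dominant integral weight outside the two half-spin towers survives, and this combinatorial check is the technical heart of the argument, carried out in detail in \cite{P2}.
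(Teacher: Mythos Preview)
Your proposal is correct and follows exactly the approach of \cite{P2}, which the paper simply cites without reproducing: the Zhu-algebra reduction $A(\mathcal{V}_{D_\ell^{(1)}}((-\ell+2)\Lambda_0))\cong U(\g_{D_\ell})/\langle [v]\rangle$, explicit computation of $[v]$, and the polynomial case analysis on dominant integral weights. One small phrasing issue: in your converse direction, the sentence ``the ideal generated by $v$ meets the irreducible module $L(\lambda)$ in a proper, hence zero, submodule'' conflates the ideal (which lives in the vertex algebra) with its action on the module; what you mean is that the $N_{D_\ell^{(1)}}((-\ell+2)\Lambda_0)$--submodule of $L(\lambda)$ spanned by $\{a_n w : a\in\langle v\rangle,\ n\in\Z,\ w\in L(\lambda)\}$ is proper, hence zero---or, more cleanly, invoke Zhu's correspondence directly once $V_{D_\ell}(\mu)$ is known to be an $A(\mathcal{V})$--module.
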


In this section we shall prove that the above modules actually give
all irreducible modules for the simple vertex operator algebra
$L_{D_{\ell} ^{(1)}}((-\ell+2)\Lambda_0)$ in the case when $\ell$ is
odd.

Top components of irreducible modules from Proposition \ref{class-d}
are irreducible modules for the simple Lie algebra of type $D$. So
we get an interesting series of modules:
 $$ V_{D_{\ell} } (s \omega_{\ell -1}), \  V_{D_{\ell}} (s \omega_{\ell})   \qquad (s \in {\Zp}). $$

 Let $$U_{D_{\ell} }  (s)  :=\left\{ \begin{array}{cc}
                               V_{ D_{\ell} } (s \omega_{\ell -1} ),  &  \mbox{for} \ s \ge 0  \\
                               V_{ D_{\ell} } (-s  \omega_{\ell } ), &  \mbox{for} \ s < 0.
                             \end{array} \right .
   $$
 These modules appear in the representation theory of simple Lie algebras.
 Particulary important result is obtained by S. Okada \cite{Okada}.
 He described characters of these representations and  their tensor products.
 He proved that their tensor product is multiplicity free, i.e., every irreducible component has multiplicity one.

 By using Theorem 2.5 of \cite{Okada} we have:
 \begin{proposition} \label{tensor-D-Okada} Assume that $\ell \ge 3$ is an odd natural number.
 Assume that $r, s \in {\Z}$.  Then $U_{D_{\ell} }  (t)$ appears in the tensor product $U_{D_{\ell} }  (r) \otimes U_{D_{\ell} }  (s)$
 if and only if $t = r +s$. The multiplicity is one.
 \end{proposition}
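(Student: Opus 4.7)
The proposition is a purely representation-theoretic statement about type $D_\ell$, with no vertex-algebraic input needed, so my plan is to reduce it directly to Okada's Theorem 2.5 in \cite{Okada}, which provides an explicit multiplicity-free decomposition of tensor products of the form $V_{D_\ell}(p\omega_{\ell-1}) \otimes V_{D_\ell}(q\omega_{\ell-1})$ and the analogues involving $\omega_\ell$, into irreducible $\mathfrak{g}_{D_\ell}$--modules parametrized by partitions fitting inside a rectangle.

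I would first unwind the definition of $U_{D_\ell}(s)$ --- equal to $V_{D_\ell}(s\omega_{\ell-1})$ for $s \ge 0$ and to $V_{D_\ell}(-s\omega_\ell)$ for $s < 0$ --- and split the proof into three cases according to the signs of $r$ and $s$: both nonnegative, both negative, and mixed. In each case Okada's theorem supplies a complete list of irreducible summands, with every multiplicity equal to $1$. The task is then to identify which summands are themselves of the form $U_{D_\ell}(t)$, i.e.\ have highest weight a pure integer multiple of $\omega_{\ell-1}$ or of $\omega_\ell$. Since these are minuscule fundamental weights (dual to each other when $\ell$ is odd), such a highest weight is extremal in the weight polytope, so at most one partition in Okada's list can produce it; computing the Dynkin label of that extremal partition at the relevant spinor node will yield exactly $t = r+s$ (with the correct sign). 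Multiplicity one is then inherited from the overall multiplicity-freeness.

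The step I expect to be the main obstacle is the combinatorial bookkeeping needed to translate Okada's partition parametrization into Dynkin-label data at the two spinor nodes $\ell-1$ and $\ell$, especially in the mixed-sign case $r \ge 0 > s$, where the relevant tensor product $V_{D_\ell}(r\omega_{\ell-1}) \otimes V_{D_\ell}(-s\omega_\ell)$ involves a module and (a multiple of) a dual of a module in the same family and Okada's decomposition has a slightly different shape from the like-sign cases. The oddness assumption on $\ell$ enters precisely here, since for odd $\ell$ the outer duality exchanges $\omega_{\ell-1}$ and $\omega_\ell$, making the mixed-sign analysis reduce to the like-sign one under this exchange; for even $\ell$ the two half-spin representations are self-dual and one would expect spurious extra summands of the required form to appear, explaining why the hypothesis is genuinely needed.
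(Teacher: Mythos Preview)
Your approach is essentially the paper's: invoke Okada's Theorem 2.5, split into cases by the signs of $r$ and $s$, and identify which summands have highest weight a pure multiple of $\omega_{\ell-1}$ or $\omega_\ell$. The paper carries out only the case $r,s\ge 0$ explicitly and declares the others similar; moreover, Okada's result (as quoted in the paper) is already stated in fundamental-weight coordinates, so the ``partition-to-Dynkin-label translation'' you anticipate as the main obstacle is not actually needed.

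One correction, however: your proposed reduction of the mixed-sign case to the like-sign case via duality does not work as stated. The diagram automorphism swaps $\omega_{\ell-1}\leftrightarrow\omega_\ell$ for \emph{every} $\ell$, and applying it (or the duality functor, for odd $\ell$) to $V_{D_\ell}(r\omega_{\ell-1})\otimes V_{D_\ell}(s\omega_\ell)$ again yields a mixed product, not a like-sign one. The oddness hypothesis enters not through such a reduction but through the explicit form of Okada's decomposition itself --- the paper cites Theorem 2.5\,(3) of \cite{Okada}, which is the formula specific to odd $\ell$, and from which one reads off directly that only $\omega_{\ell-1}$ (and odd-indexed $\omega_i$'s) occur in the summands. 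Since Okada treats the mixed case directly as well, this does not break your argument; you should simply handle each sign case by citing the relevant part of Okada rather than attempting the duality shortcut.
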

\begin{proof}
We present the proof in the case $r \geq 0$ and $s \geq 0$. Other
cases are treated similarly. The combinatorial description of tensor
product decomposition from Theorem 2.5 (3) (for $\ell$ odd) of
\cite{Okada} gives that
$$ V_{ D_{\ell} } (r \omega_{\ell -1} ) \otimes V_{ D_{\ell} } (s
\omega_{\ell -1} ) = \bigoplus _{\mu} V_{D_{\ell} }( \mu),$$
where the summation goes over dominant integral weights $\mu$ such
that
$$\mu = (k_1 \omega _1 + k_3 \omega _3 + k_5 \omega
_5 + \ldots + k_{\ell -2} \omega _{\ell -2}) + k_{\ell -1} \omega
_{\ell -1} \quad (k_i \in \Zp),$$
and
$$ 2(k_1 + k_3 + \ldots + k_{\ell -2}) + k_{\ell -1}= r+s.$$
This implies that modules $V_{ D_{\ell} } (t \omega_{\ell} )$ do not
appear in that tensor product and that $V_{ D_{\ell} } (t
\omega_{\ell -1} )$ appears only for $t = r +s$ (with multiplicity
one). The claim follows.
\end{proof}

This tensor product decomposition implies the following result about
fusion rules:
 \begin{theorem} \label{thm-int-d}
 Assume that $\widetilde{\pi_r}$, $r \in \Z$  are $\Zp$--graded ${\mathcal V}_{D_{2\ell-1} ^{(1)}}((-2\ell+3)\Lambda_0)$-modules such that top
 component of $\widetilde{\pi_r}$ is isomorphic to the irreducible $\frak{g}_{D_{2\ell-1} }$--module $U_{D_{2\ell-1} }  (r)$.
 Let $\pi_r$ denote the associated simple quotient. Assume that there is a non-trivial intertwining operator of type
 $${ \pi_t \choose \ \widetilde{\pi_r} \ \ \pi_s }.$$
 Then $t = r+s$.
 \end{theorem}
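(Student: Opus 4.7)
The plan is to run the standard fusion-rules argument, parallel to the type-$A$ case treated in Section~\ref{sect-semisimple}, with Proposition~\ref{tensor-D-Okada} playing the role of Lemma~\ref{vazna-1}. Specifically, from a nonzero intertwining operator $I$ of type ${\pi_t\choose\widetilde{\pi_r}\ \pi_s}$ I will extract a nonzero $\g_{D_{2\ell-1}}$-module homomorphism
$$\Psi\colon U_{D_{2\ell-1}}(r)\otimes U_{D_{2\ell-1}}(s)\longrightarrow U_{D_{2\ell-1}}(t),$$
and then invoke Okada's multiplicity-free tensor product decomposition to conclude $t=r+s$.

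To construct $\Psi$, let $h_r,h_s,h_t$ denote the conformal weights of the top components of $\widetilde{\pi_r},\pi_s,\pi_t$. For $u$ in the top of $\widetilde{\pi_r}$ and $v$ in the top of $\pi_s$, expand $I(u,z)v=\sum_{n}u_n v\,z^{-n-1}$; since $u_n v$ has $L(0)$-eigenvalue $h_r+h_s-n-1$, the $\Zp$-grading of $\pi_t$ forces $u_n v=0$ for $n>n_0:=h_r+h_s-h_t-1$, and the critical coefficient $u_{n_0}v$ lies in the top of $\pi_t\cong U_{D_{2\ell-1}}(t)$. The commutator formula $[x(m),u_n]=(x(0)u)_{n+m}$ for $x\in\g_{D_{2\ell-1}}$, which follows from the Jacobi identity for $I$ against the affine vertex operator algebra together with $x(i)u=0$ for $i\ge 1$, then shows that $\Psi(u\otimes v):=u_{n_0}v$ is $\g_{D_{2\ell-1}}$-equivariant.

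The remaining step is to verify that $\Psi\ne 0$ whenever $I\ne 0$; this is the technical content of the ``standard fusion rules argument.'' Since $\pi_s$ is simple it is generated over $\mathcal V_{D_{2\ell-1}^{(1)}}((-2\ell+3)\Lambda_0)$ by its top component, and iterating the Jacobi identity propagates the vanishing $u_{n_0}v=0$ (for all top $u,v$) first to $I(u,z)w=0$ for arbitrary $w\in\pi_s$ with $u$ in the top of $\widetilde{\pi_r}$, and then, by an analogous reduction in the first slot, to $I=0$, contradicting $I\ne 0$. Once $\Psi$ is known to be a nonzero $\g_{D_{2\ell-1}}$-map whose codomain is the irreducible module $U_{D_{2\ell-1}}(t)$, complete reducibility of the tensor product implies that $U_{D_{2\ell-1}}(t)$ appears as a direct summand of $U_{D_{2\ell-1}}(r)\otimes U_{D_{2\ell-1}}(s)$, and Proposition~\ref{tensor-D-Okada} then forces $t=r+s$. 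The main obstacle is the propagation step ``$I\ne 0\Rightarrow\Psi\ne 0$,'' which is routine but delicate; in a full write-up I would spell it out by iterating the commutator formula above in both arguments of $I$, exactly as in the classical Frenkel--Zhu framework.
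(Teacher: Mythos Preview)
Your proposal is correct and matches the paper's approach: the paper gives no explicit proof, simply stating that the theorem follows from the tensor product decomposition in Proposition~\ref{tensor-D-Okada} via the standard fusion-rules argument, which is exactly what you have sketched (and in more detail than the paper supplies). The only small point to watch in a full write-up is that the propagation in the first slot tacitly uses that $\widetilde{\pi_r}$ is generated by its top component---implicit in the phrase ``the associated simple quotient''---and your reference to the Frenkel--Zhu framework is the right place to make that precise.
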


\begin{remark} It is interesting that in the case of  ${\mathcal V}_{D_{2\ell} ^{(1)}}((-2\ell+2)\Lambda_0)$--modules
we get a different fusion algebra.
\end{remark}

Now we shall apply our result on fusion rules to get complete
classification of irreducible ordinary $L_{D_{2 \ell -1} ^{(1)}}
(-(2 \ell -3) \Lambda_0)$--modules.

\begin{theorem} \label{thm-class-D-neparne}
 The set
\bea && \left\{ L_{D_{2 \ell -1}  ^{(1)}}(-(s+ 2 \ell -3)\Lambda_0+
s \Lambda _{2\ell-2}), L_{D_{2 \ell -1} ^{(1)}}(-(s+2 \ell
-3)\Lambda_0+ s \Lambda _{2\ell-1}) \ \vert \ s \in \Z _{\geq 0}
\right \} \nonumber \\
\label{set-1}
\eea
provides the complete list of irreducible ordinary $L_{D_{ 2 \ell -1} ^{(1)}}
(-( 2 \ell -3) \Lambda_0)$--modules.
\end{theorem}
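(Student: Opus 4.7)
The plan is to show that every irreducible ordinary $L_{D_{2\ell-1}^{(1)}}(-(2\ell-3)\Lambda_0)$-module lies in the list (\ref{set-1}); the reverse inclusion will then follow routinely by checking that each module in (\ref{set-1}) descends from ${\mathcal V} := {\mathcal V}_{D_{2\ell-1}^{(1)}}(-(2\ell-3)\Lambda_0)$ to $L := L_{D_{2\ell-1}^{(1)}}(-(2\ell-3)\Lambda_0)$. Every irreducible $L$-module is automatically an irreducible ${\mathcal V}$-module, so by Proposition \ref{class-d} (applied with $\ell$ replaced by $2\ell-1$) the list of irreducible $L$-modules is contained in (\ref{set-1}). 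The nontrivial content is to show that the kernel $J := \ker({\mathcal V} \twoheadrightarrow L)$ acts as zero on each $\pi_s$ from (\ref{set-1}), in the labeling of Theorem \ref{thm-int-d}.

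I would proceed by contradiction. Assume $J$ acts non-trivially on some $\pi_s$. Since $J$ is a $\mathbb{Z}_{\geq 0}$-graded ${\mathcal V}$-submodule of ${\mathcal V}$ with $J_0 = 0$, one can extract a nonzero vector $w \in J$ annihilated by $\hat{\mathfrak{n}}_+$, so that $\widetilde{M} := U(\hat{\mathfrak{g}}_{D_{2\ell-1}}) \cdot w$ is a $\mathbb{Z}_{\geq 0}$-graded ${\mathcal V}$-submodule of $J$ whose top is an irreducible $\mathfrak{g}_{D_{2\ell-1}}$-module. By Proposition \ref{class-d}, the simple quotient of $\widetilde{M}$ must be $\pi_k$ for some $k \in \mathbb{Z}$, and since $\bf 1 \notin J$ we have $k \neq 0$. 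Thus in the notation of Theorem \ref{thm-int-d}, $\widetilde{M}$ is a $\widetilde{\pi_k}$.

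The restriction of the ${\mathcal V}$-module map $Y_{\pi_s}(\cdot,z) : {\mathcal V} \otimes \pi_s \to \pi_s((z))$ to $\widetilde{M} \otimes \pi_s$ is an intertwining operator of type $\binom{\pi_s}{\widetilde{\pi_k}\ \pi_s}$. If one arranges (by successively passing to subvectors inside $J$ that still act non-trivially, using the hypothesis that $J$ acts non-trivially on $\pi_s$) that this restriction is nonzero, then Theorem \ref{thm-int-d} forces $s = k+s$, i.e.\ $k = 0$, a contradiction. Hence every singular vector of $J$ acts as zero on $\pi_s$; because $J$ is generated as an $\hat{\mathfrak{g}}_{D_{2\ell-1}}$-module by such singular vectors, $J$ acts as zero on $\pi_s$, so $\pi_s$ is an $L$-module.

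The main obstacle I anticipate is the step of producing, whenever $J$ acts non-trivially on $\pi_s$, a singular vector $w \in J$ which still acts non-trivially on $\pi_s$ and whose generated submodule matches the description of $\widetilde{\pi_k}$ in Theorem \ref{thm-int-d}. This requires a careful use of the grading on $J$ together with the classification in Proposition \ref{class-d}, to ensure that the top of the chosen $\widetilde{M}$ is exactly an irreducible $U_{D_{2\ell-1}}(k)$ and not merely a reducible highest-weight $\mathfrak{g}$-module; everything else is bookkeeping and a standard application of the fusion-rule obstruction.
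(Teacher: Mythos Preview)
Your argument is correct and is essentially the paper's own proof: one shows that for every singular vector $\Omega\in J$ the restriction of $Y_M$ gives an intertwining operator of type ${M \choose \widetilde{\pi_k}\ \ M}$ with $k\ne 0$, which Theorem~\ref{thm-int-d} forbids, and then uses that $J$ is generated by its singular vectors. The obstacle you flag is not genuine: since ${\mathcal V}$ has finite-dimensional $L(0)$--eigenspaces, the top of $\widetilde{M}=U(\hat{\mathfrak g}_{D_{2\ell-1}})\cdot w$ is a finite-dimensional highest weight ${\mathfrak g}_{D_{2\ell-1}}$--module and hence automatically irreducible (so, by Proposition~\ref{class-d}, isomorphic to some $U_{D_{2\ell-1}}(k)$), and you never need to produce a singular $w$ that still acts non-trivially---proving that \emph{every} singular vector of $J$ acts trivially, as you do in your final paragraph, is exactly what is required.
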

\begin{proof} From Proposition \ref{class-d} we get  that every irreducible module
for the simple vertex operator algebra $L_{D_{ 2 \ell -1} ^{(1)}}
(-( 2 \ell -3) \Lambda_0)$ belongs to the set (\ref{set-1}). It
remains to prove that every module $(M, Y_M(\cdot,z) )$ from the set
(\ref{set-1}) is in fact a module for $L_{D_{ 2 \ell -1} ^{(1)}} (-(
2 \ell -3) \Lambda_0)$.

Let $J (-(2 \ell-3) \Lambda_0)$ be the maximal ideal in the vertex
operator algebra ${\mathcal V} _{D_{ 2 \ell -1} ^{(1)}}(-(2 \ell-3)
\Lambda_0)$. Since $J (-(2 \ell-3) \Lambda_0)$ is an ordinary
${\mathcal V} _{D_{ 2 \ell -1} ^{(1)}} (-(2 \ell-3)
\Lambda_0)$--module, we conclude that it is generated by singular
vectors which have weights $\lambda_r = -(r+ 2 \ell -3) \Lambda_0 +
r \Lambda_{ 2 \ell -2}$ or $\mu_r =- (r+ 2 \ell -3) \Lambda_0 + r
\Lambda_{2 \ell-1}$, $r \in \N$. Assume that there is a singular
vector  $\Omega$ of weight $\lambda_r$ or $\mu_r$  such that vertex
operator $Y_M(\Omega,z)$ is non-zero on $M$. This leads to a
non-trivial intertwining operator of type
$$ { M \choose \widetilde{L(\lambda_r)} \ \ M  } \quad \mbox{or} \quad { M \choose \widetilde{L(\mu_r)} \ \ M }.$$
This contradicts Theorem \ref{thm-int-d}.

Therefore, vertex operators $Y_M(v,z)$ must vanish for all $v \in
J(-(2 \ell-3) \Lambda_0)$. This proves that $M$ is an $L_{D_{ 2 \ell
-1} ^{(1)}} (-( 2 \ell -3) \Lambda_0)$--module.
\end{proof}

\section{Conformal embedding of $L_{D_{5} ^{(1)}}(-3\Lambda_0) \otimes M(1)$ into $L_{E_{6} ^{(1)}}(-3\Lambda_0)$ }
\label{sect-conf-emb}

In this section we use the methods from Sections \ref{sect-main} and
\ref{sect-irr-charge} and the fusion rules analysis from Section
\ref{sect-fusionD} to construct the conformal embedding of $L_{D_{5}
^{(1)}}(-3 \Lambda_0) \otimes M(1)$ into $L_{E_{6} ^{(1)}}(-3
\Lambda_0)$. We also determine the decomposition of $L_{E_{6}
^{(1)}}(-3 \Lambda_0)$ into direct sum of $L_{D_{5} ^{(1)}}(-3
\Lambda_0) \otimes M(1)$--modules.

Let ${\frak g}_{E_6}$ be the simple Lie algebra of type $E_6$. We
will use the construction of the root system of type $E_6$ from
\cite{Bou}, \cite{H}, and the notation for root vectors from
\cite{AP-ART}. Note now that the subalgebra of ${\frak g}_{E_6}$
generated by (suitably chosen) positive root vectors
\bea && e_{(5)}=e_{ \frac{1}{2}( \epsilon_8 - \epsilon_7 -
\epsilon_6 +
\epsilon_5 - \epsilon_4 - \epsilon_3 - \epsilon_2 - \epsilon_1 )}, \nonumber \\
&&  e_{\alpha _2}= e_{\epsilon _2 + \epsilon _1}, \nonumber \\
&& e_{\alpha _4}= e_{\epsilon _3 - \epsilon _2}, \nonumber \\
&& e_{\alpha _3}= e_{\epsilon _2 - \epsilon _1}, \nonumber \\
&& e_{\alpha _5}= e_{\epsilon _4 - \epsilon _3} \nonumber  \eea
and associated negative root vectors is a simple Lie algebra of type
$D_5$. We denote this Lie algebra by ${\frak g}_{D_5}$. Thus,
${\frak g}_{E_6}$ has a reductive subalgebra ${\frak g}_{D_5} \oplus
{\frak h}$, where ${\frak h}= \C H$, and
$$H= \frac{1}{3}(h_8-h_7-h_6-3h_5)$$
(where $h_i$ are determined by $\epsilon_i(h_j)=\delta_{ij}$), with
the associated decomposition
\bea \label{decompLieDE} {\frak g}_{E_6} = {\frak g}_{D_5} \oplus
{\frak h} \oplus V_{D_5 } (\omega_4) \otimes \C _1
 \oplus V_{D_5 } (\omega_5) \otimes \C _{-1}, \eea
where $\C _r$ denotes the one-dimensional ${\frak h}$--module on
which $H$ acts as scalar $r \in \C$. The highest weight vectors of
${\frak g}_{D_5}$--modules $V_{D_5 } (\omega_4)$ and $V_{D_5 }
(\omega_5)$ are $e_{(234)}$ and $e_{\epsilon _5 + \epsilon _4}$,
respectively.

Denote by $\widetilde{L}_{D_5 ^{(1)}} (-3 \Lambda_0)$ the subalgebra
of $L_{E_{6} ^{(1)}}(-3 \Lambda_0)$ generated by ${\frak g}_{D_5}$
and by $M(1)$ the Heisenberg vertex subalgebra of $L_{E_{6}
^{(1)}}(-3 \Lambda_0)$ generated by~$H$. Relation
(\ref{decompLieDE}) and the criterion for conformal embeddings from
\cite{AP-ART} give:
\begin{proposition}
Vertex operator algebra $\widetilde{L}_{D_5 ^{(1)}} (-3 \Lambda_0)
\otimes M(1)$ is conformally embedded in $L_{E_{6} ^{(1)}}(-3
\Lambda_0)$.
\end{proposition}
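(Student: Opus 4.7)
The plan is to apply the criterion for conformal embeddings established in \cite{AP-ART}, which in this setting reduces the verification to an equality of central charges between the two commuting subalgebras and the ambient simple affine vertex operator algebra. To invoke it, I would (i) confirm that $\widetilde{L}_{D_5 ^{(1)}}(-3\Lambda_0)$ and $M(1)$ commute as vertex subalgebras of $L_{E_6 ^{(1)}}(-3\Lambda_0)$; (ii) identify the level at which the affine $D_5$ subalgebra sits; and (iii) check that the sum of the relevant central charges matches that of $L_{E_6 ^{(1)}}(-3\Lambda_0)$.

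The first point follows immediately from (\ref{decompLieDE}): the element $H$ spans the one-dimensional factor ${\frak h}$ in the reductive subalgebra ${\frak g}_{D_5} \oplus {\frak h}$ and hence commutes with ${\frak g}_{D_5}$; this commutativity lifts to the generated vertex subalgebras by the standard OPE computation. For the second point, the branching (\ref{decompLieDE}) realizes $D_5 \subset E_6$ as the minimal embedding of Dynkin index one: the adjoint of $E_6$ restricts to the adjoint of $D_5$ together with the two minuscule half-spin representations $V_{D_5}(\omega_4)$ and $V_{D_5}(\omega_5)$ and the one-dimensional ${\frak h}$. Consequently the normalized invariant form on ${\frak g}_{E_6}$ restricts to the normalized invariant form on ${\frak g}_{D_5}$, so $\widetilde{L}_{D_5 ^{(1)}}(-3\Lambda_0)$ sits at level $-3$.

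For the central charge comparison I would use $h^\vee(E_6)=12$, $\dim E_6 = 78$, $h^\vee(D_5)=8$, $\dim D_5=45$, together with $c(M(1))=1$, giving
$$ c(L_{E_6 ^{(1)}}(-3\Lambda_0)) = \frac{-3 \cdot 78}{-3+12} = -26, \qquad c(\widetilde{L}_{D_5 ^{(1)}}(-3\Lambda_0)) + c(M(1)) = \frac{-3 \cdot 45}{-3+8} + 1 = -26. $$
Since the central charges agree and the two subalgebras commute, the criterion of \cite{AP-ART} yields the identity $\omega_{E_6,\mathrm{Sug}} = \omega_{D_5,\mathrm{Sug}} + \omega_{M(1)}$ of Virasoro vectors in $L_{E_6 ^{(1)}}(-3\Lambda_0)$, which is exactly the claim that $\widetilde{L}_{D_5 ^{(1)}}(-3\Lambda_0) \otimes M(1) \hookrightarrow L_{E_6 ^{(1)}}(-3\Lambda_0)$ is a conformal embedding. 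The most delicate point is the Dynkin-index verification pinning the level of the $D_5$ subalgebra down to $-3$ rather than a rescaling thereof; this is a finite-dimensional check using (\ref{decompLieDE}) and the normalization of the invariant form on ${\frak g}_{E_6}$, and is the main obstacle to be handled carefully.
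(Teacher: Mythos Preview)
Your proposal is correct and follows the same route as the paper: both derive the statement directly from the decomposition (\ref{decompLieDE}) together with the conformal-embedding criterion of \cite{AP-ART}, and the paper in fact gives no further details beyond that one-line citation. Your elaboration---verifying the commutativity of the two factors, the Dynkin-index-one embedding fixing the $D_5$ level at $-3$, and the central-charge identity $-26 = -27 + 1$---is exactly the computation one expects to fill in.
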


As before, denote by $M(1, s)$ the irreducible $M(1)$--module on
which $H(0)$ acts as scalar $s \in {\C}$. It follows that
$\widetilde{L}_{D_5 ^{(1)}} (-3 \Lambda_0) \otimes M(1)$--submodule
$$\widetilde{L}_{D_5 ^{(1)}} (-3 \Lambda_0 + \mu) \otimes
M(1,s)$$
of $L_{E_{6} ^{(1)}}(-3 \Lambda_0)$ has lowest conformal weight
\begin{equation} \label{rel-lowest-confw-D}
\frac{(\mu , \mu + 2 \rho)}{10}- \frac{s^2}{8}.
\end{equation}

It was shown in \cite{AP-ART} that
\begin{eqnarray*}
&& v_{E_{6}} = ( e_{(5)}(-1)e_{(12345)}(-1) +
e_{(125)}(-1)e_{(345)}(-1) \\
&& \qquad + e_{(135)}(-1)e_{(245)}(-1) + e_{(235)}(-1)e_{(145)}(-1)
) {\bf 1}
\end{eqnarray*}
is a singular vector in $N_{E_{6} ^{(1)}}(-3 \Lambda_0)$. Thus, this
vector is trivial in $L_{E_{6} ^{(1)}}(-3 \Lambda_0)$.

The subalgebra of $N_{E_{6} ^{(1)}}(-3 \Lambda_0)$ generated by
${\frak g}_{D_5}$ is isomorphic to $N_{D_{5} ^{(1)}}(-3 \Lambda_0)$.
Note that $v_{E_{6}} \in N_{D_{5} ^{(1)}}(-3 \Lambda_0)$, and that
in fact $v_{E_{6}}$ is equal to vector $v$ from relation
(\ref{rel-singvD-lowest}), for $\ell=5$. Since $v_{E_{6}}$ is
trivial in $L_{E_{6} ^{(1)}}(-3 \Lambda_0)$, we conclude that
$\widetilde{L}_{D_5 ^{(1)}} (-3 \Lambda_0)$ is a quotient of vertex
algebra ${\mathcal V}_{D_5 ^{(1)}}(-3 \Lambda_0)$ from
Section~\ref{sect-fusionD}. The classification results from
Proposition \ref{class-d} and Theorem \ref{thm-class-D-neparne} now
give:
\begin{proposition} \label{prop-class-tilda}
 The set
\begin{eqnarray*}
\left\{ L_{D_5 ^{(1)}}(-(s+3)\Lambda_0+ s \Lambda _4), L_{D_5
^{(1)}}(-(s+3)\Lambda_0+ s \Lambda _5) \ \vert \ s \in \Z _{\geq 0}
\right\}
\end{eqnarray*}
provides the complete list of irreducible ordinary
$\widetilde{L}_{D_5 ^{(1)}} (-3 \Lambda_0)$--modules.
\end{proposition}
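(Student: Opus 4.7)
The plan is to bracket the vertex algebra $\widetilde{L}_{D_5^{(1)}}(-3\Lambda_0)$ between the two vertex operator algebras ${\mathcal V}_{D_5^{(1)}}(-3\Lambda_0)$ and $L_{D_5^{(1)}}(-3\Lambda_0)$ by means of surjective vertex algebra homomorphisms, and then to transfer the classifications already obtained in Proposition~\ref{class-d} and Theorem~\ref{thm-class-D-neparne}.

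For the upper bound, I would use the observation recorded just above the statement: the singular vector $v$ of (\ref{rel-singvD-lowest}) at $\ell=5$ coincides with $v_{E_6}$, which vanishes in the simple vertex operator algebra $L_{E_6^{(1)}}(-3\Lambda_0)$ and therefore in its subalgebra $\widetilde{L}_{D_5^{(1)}}(-3\Lambda_0)$. Consequently the canonical map out of $N_{D_5^{(1)}}(-3\Lambda_0)$ factors through a surjection ${\mathcal V}_{D_5^{(1)}}(-3\Lambda_0)\twoheadrightarrow\widetilde{L}_{D_5^{(1)}}(-3\Lambda_0)$. Any ordinary $\widetilde{L}_{D_5^{(1)}}(-3\Lambda_0)$-module is then, by pullback, an ordinary ${\mathcal V}_{D_5^{(1)}}(-3\Lambda_0)$-module, so Proposition~\ref{class-d} with $\ell=5$ forces every irreducible instance to appear in the asserted list.

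For the lower bound, I would observe that $\widetilde{L}_{D_5^{(1)}}(-3\Lambda_0)$ is itself a non-zero quotient of the universal vertex algebra $N_{D_5^{(1)}}(-3\Lambda_0)$ (it contains the image of the vacuum of the simple VOA $L_{E_6^{(1)}}(-3\Lambda_0)$), so the kernel of that quotient map is a proper ideal and is therefore contained in the unique maximal ideal. This produces a further surjection $\widetilde{L}_{D_5^{(1)}}(-3\Lambda_0)\twoheadrightarrow L_{D_5^{(1)}}(-3\Lambda_0)$. Applying Theorem~\ref{thm-class-D-neparne} with $\ell=3$ (so that $2\ell-1=5$ and the level equals $-(2\ell-3)=-3$), each module in the listed set is an irreducible ordinary $L_{D_5^{(1)}}(-3\Lambda_0)$-module, and pulling back along this second surjection turns it into an irreducible ordinary $\widetilde{L}_{D_5^{(1)}}(-3\Lambda_0)$-module. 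The main obstacle is minimal: essentially all heavy lifting (Okada's multiplicity-free tensor product, the fusion rule Theorem~\ref{thm-int-d}, and Theorem~\ref{thm-class-D-neparne}) was done in Section~\ref{sect-fusionD}, and the only point requiring attention is the verification that $\Phi(v)=0$ so that both surjections are legitimate.
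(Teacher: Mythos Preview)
Your proposal is correct and follows exactly the route the paper takes: the text immediately preceding the proposition establishes the surjection ${\mathcal V}_{D_5^{(1)}}(-3\Lambda_0)\twoheadrightarrow\widetilde{L}_{D_5^{(1)}}(-3\Lambda_0)$ via $\Phi(v_{E_6})=0$, and then states that Proposition~\ref{class-d} together with Theorem~\ref{thm-class-D-neparne} (specialized to $\ell=3$, i.e.\ $D_5$ at level $-3$) give the result. Your sandwich argument is precisely the unpacking of that sentence.
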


Vectors $e_{(234)}(-1)^s {\bf 1}$ and $e_{\epsilon _5 + \epsilon
_4}(-1)^s {\bf 1}$ are (non-trivial) singular vectors for
$\hat{\frak g}_{D_5}$ in $L_{E_{6} ^{(1)}}(-3 \Lambda_0)$ of highest
weights $-(s+3)\Lambda_0+ s \Lambda _4$ and $-(s+3)\Lambda_0+ s
\Lambda _5$, respectively. Furthermore, we have:
\bea && [H(0),e_{(234)} (n)] =  e_{(234)} (n), \nonumber \\
&& [H(0),e_{\epsilon _5 + \epsilon _4} (n)] = - e_{\epsilon _5 +
\epsilon _4} (n). \nonumber \eea

We have:
$$ \widetilde{L}_{D_5 ^{(1)}} (-3\Lambda_0) \otimes M(1)  . e_{(234)}(-1)^s {\bf 1} \cong
\widetilde{L}_{D_5 ^{(1)}} (-(s+3)\Lambda_0+ s \Lambda _4) \otimes
M(1, s), $$
$$ \widetilde{L}_{D_5 ^{(1)}} (-3\Lambda_0) \otimes M(1)  . e_{\epsilon _5 + \epsilon
_4}(-1)^s {\bf 1} \cong \widetilde{L}_{D_5 ^{(1)}} (-(s+3)\Lambda_0+
s \Lambda _5) \otimes M(1, -s), $$
where $\widetilde{L}_{D_5 ^{(1)}} (-(s+3)\Lambda_0+ s \Lambda _4)$
and  $ \widetilde{L}_{D_5 ^{(1)}} (-(s+3)\Lambda_0+ s \Lambda _5)$
are certain highest weight $D_5 ^{(1)}$--modules with corresponding
highest weights.

Clearly, $H(0)$ acts semisimply on $L_{E_{6} ^{(1)}}(-3 \Lambda_0)$
and it defines the following ${\Z}$--gradation:
$$
L_{E_{6} ^{(1)}}(-3 \Lambda_0) = \bigoplus_{ s \in \Z} L_{E_{6}
^{(1)}}(-3 \Lambda_0) ^{(s)} , \quad L_{E_{6} ^{(1)}}(-3 \Lambda_0)
^{(s)} = \{ v \in L_{E_{6} ^{(1)}}(-3 \Lambda_0) \ \vert \ H(0) v =
s v \}.
$$

The following proposition is analogous to Proposition \ref{ired-1}:

\begin{proposition} \label{ired-1-DE}
$L_{E_{6} ^{(1)}}(-3 \Lambda_0) ^{(0)}$ is a simple vertex
subalgebra of $L_{E_{6} ^{(1)}}(-3 \Lambda_0)$. Each $L_{E_{6}
^{(1)}}(-3 \Lambda_0) ^{(s)}$ ($s \in \Z$) is a simple $L_{E_{6}
^{(1)}}(-3 \Lambda_0) ^{(0)}$--module.
\end{proposition}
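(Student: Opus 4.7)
The plan is to mirror the proof of Proposition \ref{ired-1} essentially verbatim, since the only ingredients used there were: (a) the ambient vertex operator algebra is simple, (b) the $\mathbb{Z}$-gradation coming from $H(0)$ is compatible with the vertex operator product in the sense that charges add, and (c) the Dong--Mason result on cyclicity in simple vertex algebras (Corollary 4.2 of \cite{DM-galois}). All three hold in the present setting: $L_{E_6^{(1)}}(-3\Lambda_0)$ is simple by definition (being the simple quotient of $N_{E_6^{(1)}}(-3\Lambda_0)$), and $H(0)$ acts semisimply with integer eigenvalues because $H \in \widetilde{L}_{D_5^{(1)}}(-3\Lambda_0)$ commutes suitably with the action and the charges of the generating vectors (as recorded by the bracket relations $[H(0), e_{(234)}(n)] = e_{(234)}(n)$ and $[H(0), e_{\epsilon_5 + \epsilon_4}(n)] = -e_{\epsilon_5 + \epsilon_4}(n)$) are integers.

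First, I would observe that for any $r, s \in \mathbb{Z}$ the charge decomposition is compatible with products in the sense
$$L_{E_6^{(1)}}(-3\Lambda_0)^{(r)} \cdot L_{E_6^{(1)}}(-3\Lambda_0)^{(s)} = \mathrm{span}_{\mathbb{C}}\{a_j b : a \in L_{E_6^{(1)}}(-3\Lambda_0)^{(r)},\ b \in L_{E_6^{(1)}}(-3\Lambda_0)^{(s)},\ j \in \mathbb{Z}\} \subset L_{E_6^{(1)}}(-3\Lambda_0)^{(r+s)},$$
which follows from the bracket relations between $H(0)$ and the modes of homogeneous vectors. In particular, $L_{E_6^{(1)}}(-3\Lambda_0)^{(0)}$ is closed under the vertex operator product and hence is a vertex subalgebra, and each $L_{E_6^{(1)}}(-3\Lambda_0)^{(s)}$ is a module for it.

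Next, for any nonzero $v \in L_{E_6^{(1)}}(-3\Lambda_0)^{(r)}$, simplicity of $L_{E_6^{(1)}}(-3\Lambda_0)$ together with Corollary 4.2 of \cite{DM-galois} yields
$$L_{E_6^{(1)}}(-3\Lambda_0) = L_{E_6^{(1)}}(-3\Lambda_0) \cdot v.$$
Projecting this identity onto the charge-$r$ component (equivalently, restricting the left-hand factor to the charge-$0$ component) gives $L_{E_6^{(1)}}(-3\Lambda_0)^{(r)} = L_{E_6^{(1)}}(-3\Lambda_0)^{(0)} \cdot v$. This proves both simplicity of each $L_{E_6^{(1)}}(-3\Lambda_0)^{(s)}$ as an $L_{E_6^{(1)}}(-3\Lambda_0)^{(0)}$-module (taking $s = r$ and $v$ arbitrary nonzero) and simplicity of the vertex subalgebra $L_{E_6^{(1)}}(-3\Lambda_0)^{(0)}$ itself (taking $r = 0$).

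Since every step is a direct transcription of the Weyl-algebra argument and the necessary structural inputs have already been established earlier in the section, there is no substantive obstacle; the only thing that genuinely needs checking (and is immediate from the earlier bracket computations) is that $H(0)$ really does induce an integer $\mathbb{Z}$-grading on $L_{E_6^{(1)}}(-3\Lambda_0)$ so that the decomposition into charge components is well defined.
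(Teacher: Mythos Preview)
Your proposal is correct and is exactly the approach the paper takes: the paper does not even spell out a proof for Proposition \ref{ired-1-DE}, merely introducing it as ``analogous to Proposition \ref{ired-1}'', and your argument is a faithful transcription of that earlier proof with $M_\ell$ replaced by $L_{E_6^{(1)}}(-3\Lambda_0)$. (One tiny slip: $H$ generates $M(1)$ rather than lying in $\widetilde{L}_{D_5^{(1)}}(-3\Lambda_0)$, but this does not affect the argument.)
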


Clearly, $L_{E_{6} ^{(1)}}(-3 \Lambda_0) ^{(s)}$ is generated as
$L_{E_{6} ^{(1)}}(-3 \Lambda_0) ^{(0)}$--module by $e_{(234)}(-1)^s
{\bf 1}$ for $s \in \Z _{\geq 0}$, and by $e_{\epsilon _5 + \epsilon
_4}(-1)^{-s} {\bf 1}$ for $s \in \Z _{< 0}$.

\begin{theorem} \label{prosta-0-DE}
\item[(a)]
We have:
 $$L_{E_{6} ^{(1)}}(-3\Lambda_0) ^{(0)} \cong \widetilde{L}_{D_5 ^{(1)}} (-3\Lambda_0) \otimes M(1).$$
In particular, $\widetilde{L}_{D_5 ^{(1)}} (-3\Lambda_0)$ is a
simple vertex operator algebra.

\item[(b)]  Every $L_{E_{6} ^{(1)}}(-3\Lambda_0) ^{(s)}$ ($s \in
\Z$) is an irreducible $L_{D_5 ^{(1)}} (-3\Lambda_0) \otimes
M(1)$--module. In particular
\bea
&& L_{E_{6} ^{(1)}}(-3\Lambda_0) ^{(s)} \cong  L_{D_5 ^{(1)}} (-(s+3)\Lambda_0+ s \Lambda _4 ) \otimes M(1, s) \qquad (s \ge 0),  \nonumber \\
&& L_{E_{6} ^{(1)}}(-3\Lambda_0) ^{(s)} \cong  L_{D_5 ^{(1)}}
((s-3)\Lambda_0- s \Lambda _5 ) \otimes M(1, s) \qquad (s <  0).
\nonumber \eea
\end{theorem}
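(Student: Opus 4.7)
The plan is to imitate the proof of Theorem \ref{prosta-0}, using the $D_5$ analogues of its ingredients: the classification in Proposition \ref{prop-class-tilda}, the fusion rule theorem \ref{thm-int-d} (underwritten by the tensor product decomposition of Proposition \ref{tensor-D-Okada}), and the lowest conformal weight formula (\ref{rel-lowest-confw-D}). Since $H(0)$ acts semisimply on $L_{E_6^{(1)}}(-3\Lambda_0)$, its charge-zero piece $L_{E_6^{(1)}}(-3\Lambda_0)^{(0)}$ is an infinite direct sum of copies of $M(1)$ as an $M(1)$-module and therefore contains $\widetilde{L}_{D_5^{(1)}}(-3\Lambda_0) \otimes M(1)$ as a submodule. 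Part (a) will be proved by contradiction: assume this inclusion is proper. Proposition \ref{prop-class-tilda} then produces a singular vector $\Omega \in L_{E_6^{(1)}}(-3\Lambda_0)^{(0)}$, not proportional to $\mathbf{1}$, of highest weight $\lambda_r = -(r+3)\Lambda_0 + r\Lambda_4$ or $\mu_r = -(r+3)\Lambda_0 + r\Lambda_5$ for some $r \in \N$.

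Assume $\Omega$ has weight $\lambda_r$, the $\mu_r$ case being symmetric. The charge $-r$ component $L_{E_6^{(1)}}(-3\Lambda_0)^{(-r)}$ is generated over $L_{E_6^{(1)}}(-3\Lambda_0)^{(0)}$ by the $\hat{\g}_{D_5}$-singular vector $e_{\epsilon_5+\epsilon_4}(-1)^r \mathbf{1}$ of weight $\mu_r$ and conformal weight $r$. Since $L_{E_6^{(1)}}(-3\Lambda_0)$ is simple, $Y(\Omega,z)\, e_{\epsilon_5+\epsilon_4}(-1)^r \mathbf{1} \neq 0$, so one may pick the maximal $n_0$ with $\Omega_{n_0}\, e_{\epsilon_5+\epsilon_4}(-1)^r \mathbf{1} \neq 0$. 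Restricting $Y$ to these charge components yields a non-trivial intertwining operator between $\widetilde{L}_{D_5^{(1)}}(-3\Lambda_0)$-modules with input labels $r$ and $-r$, so Theorem \ref{thm-int-d} forces the output top component $U(\g_{D_5}) \cdot \Omega_{n_0}\, e_{\epsilon_5+\epsilon_4}(-1)^r \mathbf{1}$ to sit inside $U_{D_5}(0) = \C$, hence to be one-dimensional. Since this submodule carries $H(0)$-charge $-r$, formula (\ref{rel-lowest-confw-D}) with $\mu=0$ and $s=-r$ puts it at lowest conformal weight $-r^2/8 < 0$, contradicting the fact that $L_{E_6^{(1)}}(-3\Lambda_0)^{(-r)}$ has lowest conformal weight $r \geq 1$. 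This proves (a), and in particular that $\widetilde{L}_{D_5^{(1)}}(-3\Lambda_0)=L_{D_5^{(1)}}(-3\Lambda_0)$ is simple.

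Part (b) is then immediate from (a) combined with Proposition \ref{ired-1-DE}: every $L_{E_6^{(1)}}(-3\Lambda_0)^{(s)}$ is irreducible as an $L_{D_5^{(1)}}(-3\Lambda_0) \otimes M(1)$-module, and the explicit identification follows by reading off the $\hat{\g}_{D_5}$-weight and $H(0)$-eigenvalue of the generator $e_{(234)}(-1)^s \mathbf{1}$ (for $s \geq 0$) or $e_{\epsilon_5+\epsilon_4}(-1)^{-s} \mathbf{1}$ (for $s < 0$). The main obstacle is the fusion rule step: the non-vanishing mode $\Omega_{n_0}$ must be packaged into an honest intertwining operator of the shape covered by Theorem \ref{thm-int-d}, so that the sharp tensor product multiplicity data of Proposition \ref{tensor-D-Okada} can pin the output's top component down to the trivial $\g_{D_5}$-module and the conformal weight discrepancy can close the argument; everything else is bookkeeping along the lines of Sections \ref{sect-main} and \ref{sect-irr-charge}.
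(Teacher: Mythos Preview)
Your proposal is correct and follows essentially the same approach as the paper, which in fact omits the proof and merely lists the ingredients you invoke: Proposition \ref{prop-class-tilda}, the fusion rules argument based on Proposition \ref{tensor-D-Okada}, formula (\ref{rel-lowest-confw-D}), and the simplicity of $L_{E_6^{(1)}}(-3\Lambda_0)$. Your write-up actually provides more detail than the paper's own treatment, correctly tracking the charge and conformal weight bookkeeping to reach the contradiction $-r^2/8 < 0 \le r$.
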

\begin{proof} We omit the proof because of its similarity with the
proofs of Theorems \ref{prosta-0} and \ref{irr-charge}. The main
ingredients of the proof are the classification result from
Proposition \ref{prop-class-tilda}, the fusion rules argument (based
on Proposition \ref{tensor-D-Okada}), formula for the conformal
weight (\ref{rel-lowest-confw-D}), and the fact that $L_{E_{6}
^{(1)}}(-3 \Lambda_0)$ is simple vertex operator algebra.
\end{proof}

We obtain the decomposition:
\bea
 && L_{E_{6} ^{(1)}}(-3 \Lambda_0) = \bigoplus_{ s \in \Z
_{\geq 0}} L_{D_5 ^{(1)}} (-(s+3)\Lambda_0+ s \Lambda _4 ) \otimes
M(1, s) \nonumber \\
&& \qquad \oplus \bigoplus_{ s \in \Z _{< 0}} L_{D_5 ^{(1)}}
((s-3)\Lambda_0- s \Lambda _5 ) \otimes M(1,s). \label{decompVOADE}
\eea

As a consequence of decomposition (\ref{decompVOADE}), one also
obtains the decomposition of $L_{F_{4} ^{(1)}}(-3 \Lambda_0)$ into
direct sum of $L_{B_{4} ^{(1)}}(-3 \Lambda_0) \otimes M(1)
^+$--modules, where $M(1) ^+$ denotes the $\Z _2$-orbifold of $M(1)$
(cf. \cite{DN1}, \cite{DN2}). Namely, using conformal embeddings of
$L_{F_{4} ^{(1)}}(-3 \Lambda_0)$ into $L_{E_{6} ^{(1)}}(-3
\Lambda_0)$ and $L_{B_{4} ^{(1)}}(-3 \Lambda_0)$ into $L_{D_{5}
^{(1)}}(-3 \Lambda_0)$ from \cite{AP-ART}, one can easily obtain
that $L_{B_{4} ^{(1)}}(-3 \Lambda_0) \otimes M(1) ^+$ is a vertex
subalgebra of $L_{F_{4} ^{(1)}}(-3 \Lambda_0)$ with the same
conformal vector.

We obtain the following decomposition:
\begin{corollary} We have:
\bea && L_{F_{4} ^{(1)}}(-3 \Lambda_0)= L_{B_{4}
^{(1)}}(-3\Lambda_0) \otimes M(1) ^+ \oplus
L_{B_{4} ^{(1)}}(-4 \Lambda_0 + \Lambda_1) \otimes M(1) ^- \nonumber \\
&& \qquad \oplus  \bigoplus_{ s \in \Z _{> 0}} L_{B_4 ^{(1)}}
(-(s+3)\Lambda_0+ s \Lambda _4 ) \otimes M(1, s). \nonumber \eea
\end{corollary}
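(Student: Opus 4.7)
The plan is to derive this decomposition by taking fixed points in the already-established decomposition (\ref{decompVOADE}) of $L_{E_6^{(1)}}(-3\Lambda_0)$ under the outer involution $\sigma$ of $E_6$ (whose fixed finite subalgebra is $\g_{F_4}$). Since the conformal embedding $L_{F_4^{(1)}}(-3\Lambda_0) \subset L_{E_6^{(1)}}(-3\Lambda_0)$ from \cite{AP-ART} has the same conformal vector and is the one coming from the folding $E_6 \twoheadrightarrow F_4$, the simplicity of $L_{F_4^{(1)}}(-3\Lambda_0)$ forces the identification $L_{F_4^{(1)}}(-3\Lambda_0) = (L_{E_6^{(1)}}(-3\Lambda_0))^{\sigma}$. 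On the charge-zero subalgebra $L_{D_5^{(1)}}(-3\Lambda_0) \otimes M(1)$, the involution $\sigma$ restricts to the outer automorphism of $D_5$ (fixing $\g_{B_4}$) tensored with $H \mapsto -H$ on $M(1)$. The strategy is then to intersect each summand of (\ref{decompVOADE}) with the $\sigma$-fixed subspace.

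For the $s=0$ summand, $M(1) = M(1)^+ \oplus M(1)^-$ is the Heisenberg orbifold decomposition, and using the conformal embedding $L_{B_4^{(1)}}(-3\Lambda_0) \hookrightarrow L_{D_5^{(1)}}(-3\Lambda_0)$ one obtains a $\sigma$-orbifold splitting $L_{D_5^{(1)}}(-3\Lambda_0) = L_{B_4^{(1)}}(-3\Lambda_0) \oplus L_{B_4^{(1)}}(-4\Lambda_0 + \Lambda_1)$. The anti-fixed summand has top component $V_{B_4}(\omega_1) \cong \g_{D_5}/\g_{B_4}$, and its highest weight is determined by matching the lowest conformal weight at level $-3$. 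Combining these, the $\sigma$-invariant part of the $s=0$ piece is $L_{B_4^{(1)}}(-3\Lambda_0) \otimes M(1)^+ \oplus L_{B_4^{(1)}}(-4\Lambda_0 + \Lambda_1) \otimes M(1)^-$, giving the first two terms of the claimed decomposition.

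For $s \neq 0$, the involution $\sigma$ swaps the charge components $L_{E_6^{(1)}}(-3\Lambda_0)^{(s)}$ and $L_{E_6^{(1)}}(-3\Lambda_0)^{(-s)}$, since the outer automorphism of $D_5$ exchanges $\Lambda_4 \leftrightarrow \Lambda_5$ and the Heisenberg involution sends $M(1,s)$ to $M(1,-s)$. For each $s>0$ the $\sigma$-fixed subspace of the pair is the diagonal copy, which as an $L_{B_4^{(1)}}(-3\Lambda_0) \otimes M(1)^+$-module is isomorphic to the restriction of $L_{D_5^{(1)}}(-(s+3)\Lambda_0 + s\Lambda_4) \otimes M(1,s)$ to this subalgebra. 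Since $M(1,s)$ is irreducible over $M(1)^+$ and the weight $\Lambda_4^{D_5}$ restricts to $\Lambda_4^{B_4}$, a fusion-rules/singular-vector argument parallel to the proof of Theorem \ref{prosta-0-DE} identifies the restriction of $L_{D_5^{(1)}}(-(s+3)\Lambda_0 + s\Lambda_4)$ with $L_{B_4^{(1)}}(-(s+3)\Lambda_0 + s\Lambda_4)$. Summing over $s \in \N$ yields the remaining summands.

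The main obstacle is providing the $B_4^{(1)}$-analogue of Sections \ref{sect-fusionD} and \ref{sect-conf-emb}: the level-$(-3)$ classification of ordinary $L_{B_4^{(1)}}$-modules, the orbifold identification of the $\sigma$-anti-fixed piece of $L_{D_5^{(1)}}(-3\Lambda_0)$ as $L_{B_4^{(1)}}(-4\Lambda_0+\Lambda_1)$, and the irreducibility of the restricted modules $L_{D_5^{(1)}}(-(s+3)\Lambda_0+s\Lambda_4)\vert_{L_{B_4^{(1)}}(-3\Lambda_0)}$ for $s>0$. Once these parallel ingredients are in place, the termwise extraction of $\sigma$-invariants from (\ref{decompVOADE}) produces the stated decomposition of $L_{F_4^{(1)}}(-3\Lambda_0)$.
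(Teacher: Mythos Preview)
Your plan coincides with what the paper indicates: the text immediately preceding the corollary says the result follows from the $E_6$-decomposition (\ref{decompVOADE}) together with the conformal embeddings $L_{F_4^{(1)}}(-3\Lambda_0)\hookrightarrow L_{E_6^{(1)}}(-3\Lambda_0)$ and $L_{B_4^{(1)}}(-3\Lambda_0)\hookrightarrow L_{D_5^{(1)}}(-3\Lambda_0)$ from \cite{AP-ART}, which is exactly the $\Z_2$-orbifold extraction you spell out. One caveat: the sentence ``the simplicity of $L_{F_4^{(1)}}(-3\Lambda_0)$ forces the identification $L_{F_4^{(1)}}(-3\Lambda_0)=(L_{E_6^{(1)}}(-3\Lambda_0))^{\sigma}$'' is not a valid inference as written --- simplicity of a conformal subalgebra does not by itself make it equal to the full fixed-point subalgebra --- so that step (and likewise the $B_4\subset D_5$ branching at level $-3$ and the irreducibility of the restricted spinor modules that you list as obstacles) should be drawn from the conformal-embedding results of \cite{AP-ART} rather than from simplicity or from a fresh $B_4^{(1)}$ fusion-rules analysis.
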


\end{document}